\title{Bayesian~model~calibration with interpolating~polynomials based on adaptively~weighted~Leja~nodes}
\author[1,2]{L.M.M.~van~den~Bos\footnote{Corresponding author: \texttt{l.m.m.van.den.bos@cwi.nl}}}
\author[1]{B.~Sanderse}
\author[2]{W.A.A.M.~Bierbooms}
\author[2]{G.J.W.~van~Bussel}
\affil[1]{Centrum~Wiskunde~\&~Informatica, P.O.~Box 94079, 1090~GB, Amsterdam}
\affil[2]{Delft~University~of~Technology, P.O.~Box 5, 2600~AA, Delft}
\begin{document}
\maketitle

\begin{abstract}
\noindent An efficient algorithm is proposed for Bayesian model calibration, which is commonly used to estimate the model parameters of non-linear, computationally expensive models using measurement data. The approach is based on Bayesian statistics: using a prior distribution and a likelihood, the posterior distribution is obtained through application of Bayes' law. Our novel algorithm to accurately determine this posterior requires significantly fewer discrete model evaluations than traditional Monte Carlo methods. The key idea is to replace the expensive model by an interpolating surrogate model and to construct the interpolating nodal set maximizing the accuracy of the posterior. To determine such a nodal set an extension to weighted Leja nodes is introduced, based on a new weighting function. We prove that the convergence of the posterior has the same rate as the convergence of the model. If the convergence of the posterior is measured in the Kullback--Leibler divergence, the rate doubles. The algorithm and its theoretical properties are verified in three different test cases: analytical cases that confirm the correctness of the theoretical findings, Burgers' equation to show its applicability in implicit problems, and finally the calibration of the closure parameters of a turbulence model to show the effectiveness for computationally expensive problems.
\end{abstract}
\textbf{Keywords:} Bayesian model calibration, Interpolation, Leja nodes, Surrogate modeling

\section{Introduction}
Estimating model parameters from measurements is a problem of frequent occurrence in many fields of engineering and many different approaches exist to solve this problem. We consider non-linear calibration problems (or inverse problems) where a forward evaluation of the model is computationally expensive. The approach we follow is of a stochastic nature: the unknown parameters are modeled using probability distributions and information about these parameters is inferred using Bayesian statistics. This approach is often called \emph{Bayesian model calibration}.

Bayesian model calibration \cite{Kennedy2001,Ray2015,Sargsyan2015} is a systematic way to calibrate the parameters of a computational model. By means of a statistical model to describe the relation between the model and the data, the calibrated parameters are obtained in the form of a random variable (called the \emph{posterior}) by means of Bayes' law. These random variables can then be used to assess the uncertainty in the model and to make future predictions. This procedure is well-known in the field of Bayesian statistics, where the goal is to infer unmeasured quantities from data. The calibration approach has already been applied many times, for example to calibrate the closure parameters of turbulence models \cite{Edeling2014-2,Cheung2011}. A similar example is considered in this work.

Possibly the largest drawback of Bayesian model calibration is the expensive sampling procedure that is necessary. Because the posterior depends to a large extent on the model, which is only known implicitly (e.g.\ a computer code numerically solving a partial differential equation), determining a sample from the posterior is mostly done using Markov chain Monte Carlo (MCMC) methods \cite{Metropolis1953,Hastings1970}, requiring many expensive model evaluations. Improvements have been made to accelerate these MCMC methods, e.g.\ the DREAM algorithm \cite{Vrugt2009} or adaptive sampling \cite{Yeung2002}. Replacing the sampling procedure itself is also possible, e.g.\ methods based on sparse grids \cite{Chauveau2002,Ma2009} or Approximate Bayesian Computation \cite{Beaumont2002,Csillery2010,Lopes2009}. However, this encompasses stringent assumptions on the statistical model or still requires many model runs as the shape of the posterior is unknown.

A different approach is followed in the current article. In essence we are following the approach of \citet{Marzouk2007}, which has been used several times in literature \cite{Marzouk2009,Birolleau2014,Zhang2013,Zeng2012,Absi2014,Prudhomme2015,Oliver2011,Yan2019}. The key idea in our procedure is to replace the model in the calibration step with a \emph{surrogate} (or \emph{response surface}) that approximates the computationally expensive model. MCMC can then be used to sample the resulting posterior without a large computational overhead.

Various approaches to construct this surrogate in a Bayesian context exist, for example Gaussian process emulators \cite{Stuart2017} or non-intrusive polynomial approximations \cite{Xiu2010}. In this work the latter is considered, because polynomial approximations provide high order (up to exponential) convergence for sufficiently smooth functions. Contrary to the commonly used pseudo-spectral projection methods, which are commonly known as generalized Polynomial Chaos Expansions, we choose to use interpolation of the computationally expensive model. The reason for this is that the error of a polynomial interpolant is usually measured using the absolute error (the $L^\infty$ norm), contrary to the mean squared error (the $L^2$ norm) that is used for the pseudo-spectral approaches. As the model is used as input in the Bayesian analysis, having absolute error bounds on the surrogate significantly simplifies the analysis. Moreover, the convergence of a pseudo-spectral expansion deteriorates significantly if the surrogate is not constructed using the statistical model \cite{Lu2015}. This happens in particular if the expansion is constructed with respect to the prior (which is the usual approach) and the likelihood contains significant information (i.e.\ their relative entropy is high).

The interpolating surrogate model is built using Leja nodes. Probability density functions can be incorporated using weighted Leja nodes \cite{Jantsch2016,Narayan2014}. We extend weighted Leja nodes to \emph{adaptively} refine the interpolating polynomial by using obtained posterior information. As extensive theory about interpolation polynomials exists (e.g.\ \cite{Ibrahimoglu2016}), we can prove convergence of the estimated posterior with mild assumptions on the likelihood. This extends previous work \cite{Marzouk2007,Birolleau2014}, in which the likelihood is assumed to be Gaussian. The end result is an interpolating polynomial that can be used in conjunction with the likelihood and the prior to obtain statistics of the posterior.

To demonstrate the applicability of our methodology, we will employ three different classes of test problems. The first class consist of functions that are known explicitly and can be evaluated fast and accurately. We will use these to show the effectiveness of our nodal set compared to commonly used methods. The second class consists of problems that are defined implicitly, but do not require significant computational power to solve. For this, we employ the one-dimensional Burgers' equation. In this case, it is possible to compare the estimated posterior with a posterior determined using Monte Carlo methods. The last class consists of problems of such large complexity that a quantitative comparison with a true posterior is not possible anymore. As example we consider the calibration of closure coefficients of the Spalart--Allmaras turbulence model.

This paper is set up as follows. First, we discuss Bayesian model calibration and introduce the adaptively weighted Leja nodes. In Section~\ref{sec:convergence} the theoretical properties of the algorithm are studied and its convergence is assessed. Section~\ref{sec:numerics} contains numerical tests that show evidence of the theoretical findings and in Section~\ref{sec:conclusion} conclusions are drawn.

\section{Bayesian model calibration with a surrogate}
\label{sec:bayes}
The focus is on the stochastic calibration of computationally expensive (possibly implicitly defined) models. We denote this model by $u: \Omega \rightarrow \mathbb{R}$, with $\Omega \subset \mathbb{R}^d$ ($d = 1, 2, \dots$). In this work we will not focus on the specific construction of this model, but for example $u$ arises as a solution of a set of partial differential equations. Without loss of generality, we assume that $u$ is a scalar quantity. $u$ depends on $d$ parameters, which we will denote as vector $\theb = \trans{(\vartheta_1, \dots, \vartheta_d)} \in \Omega$. One can think of $\theb$ as parameters inherent to the model, such as fitting parameters or other closure parameters.

The goal of Bayesian model calibration is to infer knowledge about the model parameters, given measurement data of the process modeled by $u$. To this end, we assume that a vector of measurements $\mathbf{z} = \trans{(z_1, \dots, z_n)}$ is given, with $z_k \in \mathbb{R}$. This data can be provided by various means, for example by measurements or by the results of a high-fidelity model. Using parameters $\theb$, a statistical model is formulated describing a relation between the model $u(\theb)$ and the data $\mathbf{z}$ by means of random variables that model among others discrepancy, error, and uncertainty. For example, these random variables account for measurement errors and numerical tolerances. Using Bayesian statistics \cite{Gelman2013}, the posterior of the parameters is formulated by means of a probability density function (PDF).

Throughout this article we let $p(\theb)$ be the prior, a PDF containing all prior information of $\theb$ obtained through physical constraints, assumptions, or previous experiments. The likelihood $p(\mathbf{z} \mid \theb)$ is obtained through the statistical relation between the model $u$ and the data $\mathbf{z}$. Possibly the most straightforward example is $z_k = u(\theb) + \vareps_k$, where $\varepsb = \trans{(\vareps_1, \dots, \vareps_n)}$ is assumed to be multivariate Gaussian distributed with mean $\mathbf{0}$ and covariance matrix $\Sigma$. This yields the following likelihood:
\begin{equation}
	\label{eq:likelihood}
	p(\mathbf{z} \mid \theb) \propto \exp\left[ -\frac{1}{2} \trans{\mathbf{d}} \Sigma^{-1} \mathbf{d} \right], \text{with $\mathbf{d}$ a vector such that $d_k = z_k - u(\theb)$.}
\end{equation}
$\mathbf{d}$ is the so-called misfit. Bayes' law is applied to obtain the posterior $p(\theb \mid \mathbf{z})$, i.e.
\begin{equation}
	\label{eq:bayes}
	p(\theb \mid \mathbf{z}) \propto p(\mathbf{z} \mid \theb) p(\theb).
\end{equation}
The posterior PDF can be used to assess information about the parameters of the model, e.g.\ by determining the expectation or the MAP estimate (i.e.\ the maximum of the posterior). The uncertainty of these parameters can be quantified by determining the moments of the posterior PDF.

Note that the posterior depends on the likelihood, which requires an expensive evaluation of the model (see \eqref{eq:likelihood}). Therefore sampling the posterior through the application of MCMC methods \cite{Metropolis1953,Hastings1970} is typically intractable for such models.

Vector-valued models $u$ can be incorporated in this framework straightforwardly, although the likelihood requires minor modifications. Typically an observation operator is introduced that restricts $u$ to the locations where measurement data is available. We will discuss an example of this in Section~\ref{subsec:rae2822}.

Throughout this article we assume the likelihood is a continuously differentiable, Lipschitz continuous function of the misfit $\mathbf{d}$ or (more general) of the model $u$. This is true for the multivariate Gaussian likelihood and (more general) for any likelihood which has additive errors (see \cite{Kennedy2001} for more examples in the context of Bayesian model calibration). There are no further constraints on the structure of the likelihood and the prior in this work, but we do not incorporate the calibration of \emph{hyperparameters}, i.e.\ parameters introduced solely in the statistical model (an example would be the calibration of the standard deviation of $\varepsb$). Moreover, we assume the prior is not improper, i.e.\ it is a well-defined distribution with $\int_\Omega p(\theb) \dd \theb = 1$. Even though this prohibits the usage of a uniform prior on an unbounded interval, in practice our methods can be applied in such a setting.

The outline of the proposed calibration procedure is as follows. Let $u_N$ be an interpolating surrogate of $u$ using $N$ distinct nodes and model evaluations. Using $u_N$, an estimated posterior can be determined, which is used to obtain the $(N+1)^\text{th}$ node. The steps are repeated until convergence is observed. Finally MCMC can be applied to the resulting posterior, because the computationally expensive model is replaced with an explicitly known surrogate.

First, we briefly introduce the interpolation polynomial for sake of completeness. Then the nodal set we will use, the Leja nodes, will be introduced.

\subsection{Interpolation methods}
In general an interpolating polynomial can be defined as follows. Let $u: \Omega \rightarrow \mathbb{R}$ with $\Omega \subset \mathbb{R}^d$ be a continuous function. Let $D$ be given and define the set $\mathbb{P}(N, d)$ (with $N = \binom{d+D}{D}$) to be all $d$-variate polynomials of degree $D$ and lower. Using a nodal set $X_{N+1} = \{\mathbf{x}_0, \dots, \mathbf{x}_N\}$ and evaluations of $u$ at each node (i.e.\ $u(\mathbf{x}_k)$ for $k = 0, \dots, N$) the goal is to determine a polynomial $u_N \in \mathbb{P}(N, d)$ such that
\begin{equation}
	u_N(\mathbf{x}_k) = u(\mathbf{x}_k), \text{ for } k = 0, \dots, N.
\end{equation}
This construction can be extended to any $N = 1, 2, 3, \dots$, provided that the monomials of the space $\mathbb{P}(N, d)$ form a well-ordered set. Throughout this article, we use a graded reverse lexicographic order.

\subsubsection{Univariate interpolation}
In the case of $d = 1$, it is well-known that if all nodes are distinct the interpolation polynomial can be stated explicitly using Lagrange interpolating polynomials, i.e.
\begin{equation}
	\label{eq:lagrange}
	u_N(x) = (L_N u)(x) \coloneqq \sum_{k=0}^N \ell^N_k(x) u(x_k), \text{ with } \ell^N_k(x) = \prod_{\substack{j = 0 \\ j \neq k}}^N \frac{x - x_j}{x_k - x_j}.
\end{equation}
Here $L_N$ is a linear operator that yields a polynomial of degree $N$, which we will denote as $u_N$. By construction, the Lagrange basis polynomials $\ell^N_k$ have the property $\ell^N_k(x_j) = \delta_{k,j}$ (i.e.\ $\ell^N_k(x_j) = 1$ if $j = k$ and $\ell^N_k(x_j) = 0$ otherwise). Therefore $u_N(x_k) = u(x_k)$ for all $k$, such that it is indeed an interpolating polynomial.

The barycentric notation \cite{Berrut2004} can be used to numerically evaluate the interpolation polynomial given a nodal set (which is unconditionally stable \cite{Higham2004}).

\subsubsection{Multivariate interpolation}
\label{subsubsec:multivariate}
The Lagrange interpolating polynomials can be formulated in a multivariate setting, by defining them in terms of the determinant of a Vandermonde-matrix:
\begin{equation}
	\label{eq:lagrangend}
	u_N(\mathbf{x}) = (L_N u)(\mathbf{x}) \coloneqq \sum_{k=0}^N \ell^N_k(\mathbf{x}) u(\mathbf{x}_k), \text{ with } \ell^N_k(\mathbf{x}) = \frac{\det V(\mathbf{x}_0, \dots, \mathbf{x}_{k-1}, \mathbf{x}, \mathbf{x}_{k+1}, \dots, \mathbf{x}_N)}{\det V(\mathbf{x}_0, \dots, \mathbf{x}_{k-1}, \mathbf{x}_k, \mathbf{x}_{k+1}, \dots, \mathbf{x}_N)},
\end{equation}
where $V(\mathbf{x}_0, \dots, \mathbf{x}_N)$ is the $(N+1) \times (N+1)$ Vandermonde-matrix with respect to the nodal set $\{\mathbf{x}_0, \dots, \mathbf{x}_N\}$, i.e.
\begin{equation}
	\label{eq:vandermonde}
	V_{i,j}(\mathbf{x}_0, \dots, \mathbf{x}_N) = \mathbf{x}_i^{\boldsymbol\alpha_j}.
\end{equation}
Here, $\boldsymbol\alpha_j \in \mathbb{N}^d$ are defined such that for $\boldsymbol\alpha = (\alpha_1, \dots, \alpha_d)$ and $\mathbf{x} = (x_1, \dots, x_d)$, we have $\mathbf{x}^{\boldsymbol\alpha} = x_1^{\alpha_1} \cdots x_d^{\alpha_d}$. As stated before, $\boldsymbol\alpha_j$ are sorted using the graded reverse lexicographic order (i.e.\ first compare the total degree, then apply reverse lexicographic order to equal monomials). This implies that $\| \boldsymbol\alpha_j \|_1$ is a sorted sequence in $j$. Multivariate interpolation by means of this Vandermonde-matrix is only well-defined if $V$ is non-singular (then $X_{N+1}$ is called a poised interpolation sequence with respect to $\mathbb{P}(N, d)$). All nodal sequences constructed in this article are (by construction) poised.

There exist various other monomial orders, for example for the purpose to construct a sparse grid \cite{Novak1996}. Also adaptive choices have been studied \cite{Narayan2014}. Often these approaches leverage structure in the underlying distribution by decomposing it in $d$ univariate distributions (i.e.\ the distribution is ``tensorized''). Such efficient approaches cannot be applied to the context of this article, because it is rarely the case that the posterior can be decomposed in $d$ univariate distributions, due to the asymmetry in the model and the measurement data. Nonetheless, the framework and algorithms proposed in this work can easily accommodate different monomial orders.

Evaluating a multivariate interpolating polynomial numerically can be done in various ways. A commonly used approach is to rewrite \eqref{eq:vandermonde} as
\begin{equation}
	\label{eq:vandermondewithortho}
	V_{i,j}(\mathbf{x}_0, \dots, \mathbf{x}_N) = \varphi_j(\mathbf{x}_i),
\end{equation}
where $\varphi_j(\mathbf{x})$ are orthogonal basis polynomials (e.g.\ Chebyshev or Legendre polynomials). The polynomials $\varphi_j$ form a linear combination of monomials, so mathematically speaking both approaches yield the same solution.

The nodal sets used in this article use the determinant of the Vandermonde-matrix. Therefore we use a QR factorization \cite{Golub2012} of the Vandermonde-matrix to determine the interpolating polynomial and reuse the QR factorization to determine the nodal set (see Section~\ref{subsec:weightedlejanodes} for details).

\subsection{Weighted Leja nodes}
\label{subsec:weightedlejanodes}
For the purpose of Bayesian model calibration, we desire an algorithm to determine a nodal set $X_N$ for any $N$ having the following properties:
\begin{enumerate}
	\item \textbf{Accuracy:} the nodal sets should yield an accurate posterior. We are mainly interested in estimating the posterior, i.e.\ it is not strictly necessary to have an accurate surrogate model on the full domain.
	\item \textbf{Nested:} we require $X_i \subset X_j$ for $i < j$, such that the obtained interpolant can be refined by reusing existing model evaluations.
	\item \textbf{Weighting:} the goal is to determine the next node based on the posterior obtained so far. The algorithm of the nodal set should allow for this.
\end{enumerate}
In this work we consider weighted Leja nodes, which form a sequence of nodes. The sequence is therefore by definition nested. First, we will define the univariate Leja nodes and generalize those to multivariate Leja nodes.

The definition of weighted Leja nodes is by induction. Let $\rho: \mathbb{R} \rightarrow \mathbb{R}_+$ be a bounded PDF (with $\mathbb{R}_+ \coloneqq [0, \infty)$) and let $\{x_0, \dots, x_N\}$ be a sequence of Leja nodes. Then the next node is defined by maximizing the numerator of $\ell^{N+1}_{N+1}(x)$, i.e.
\begin{equation}
	\label{eq:leja1}
	x_{N+1} \coloneqq \argmax_{x \in \mathbb{R}} \rho(x) |x - x_0| |x - x_1| \cdots |x - x_N|.
\end{equation}
This maximization problem does not necessarily have a unique solution. To ensure that a solution exists, it is necessary to assume that either $\rho(x)$ has bounded and closed support or (more generally) that the polynomials are dense in the space equipped with the $\infty$-norm weighted with $\rho(x)$, i.e.\ the norm $\| f \|_\rho = \| f \, \rho \|_\infty = \sup_{x \in \mathbb{R}} |f(x) \rho(x) |$ \cite{Narayan2014}. Note that the former implies the latter and that $\rho$ has finite moments in these cases. If there are multiple values maximizing \eqref{eq:leja1}, we pick the one with smallest $x$ to ensure that the sequence is reproducible (in multivariate spaces, we select the smallest one using a lexicographic ordering). The initial node $x_0$ is defined as the smallest global maximum of $\rho(x)$.

If $\rho(x)$ has finite moments, it decays faster than any polynomial grows for $x \to \infty$ (which makes the maximization problem above well-defined). To see this, assume $\rho(x)$ decays slower than the polynomial $x^k$ grows for $k > 1$, or equivalently assume $\rho(x) > 1/x^k$ for $x > A$. Then
\begin{equation}
	\int_A^\infty x^k \rho(x) \dd x > \int_A^\infty x^k \frac{1}{x^k} \dd x = \infty,
\end{equation}
which cannot be the case as $\rho(x)$ has finite moments.

Notice that definition \eqref{eq:leja1} can be rewritten as follows:
\begin{equation}
	\label{eq:leja1log}
	x_{N+1} = \argmax_{x \in \mathbb{R}, \rho(x) > 0} \log \rho(x) + \sum_{k=0}^N \log |x - x_k|.
\end{equation}
If $\rho(x)$ is bounded from below and above, i.e.\ $A \leq \rho(x) \leq B$ for $0 < A < B$ and $\rho(x)$ has bounded support, the sum $\log |x - x_0| + \dots + \log |x - x_N|$ will dominate the maximal value for large $N$. Hence for any $x$ the value of the sum will increase (but remains bounded as $\rho(x)$ has bounded support) and $\rho(x)$ will remain constant (as $\rho$ is independent of $N$). This implies that for $\rho(x)$ that are bounded from below and above, the influence of the weighting function decreases as $N$ increases.

Unweighted Leja nodes are defined with the uniform weighting function on $[-1, 1]$. We want to emphasize that multiplying the weighting function with a constant yields an identical sequence. This property is very useful for our purposes, as it allows us to neglect the constant of proportionality (often called the evidence) in Bayes' law (see \eqref{eq:bayes}).

\begin{figure}
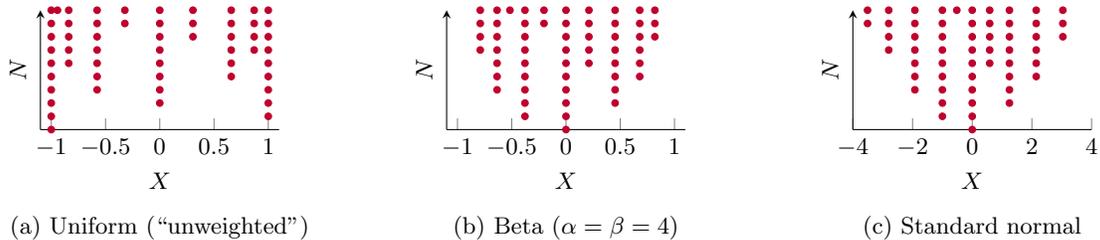

	\centering
	\begin{minipage}{.32\textwidth}
		\small
		\centering
		\includepgf{.9\textwidth}{0.6\textwidth}{leja.uniform.tikz}
		\subcaption{Uniform (``unweighted'')}
		\label{subfig:leja-uniform}
	\end{minipage}
	\begin{minipage}{.32\textwidth}
		\small
		\centering
		\includepgf{.9\textwidth}{0.6\textwidth}{leja.beta.tikz}
		\subcaption{Beta ($\alpha = \beta = 4$)}
	\end{minipage}
	\begin{minipage}{.32\textwidth}
		\small
		\centering
		\includepgf{.9\textwidth}{0.6\textwidth}{leja.stdgauss.tikz}
		\subcaption{Standard normal}
	\end{minipage}

	\caption{Univariate Leja sequences for various number of nodes and various well-known distributions.}
	\label{fig:leja}
\end{figure}

Examples of these sequences are depicted in Figure~\ref{fig:leja}. Throughout this article, univariate Leja nodes are determined by applying Newton's method to the derivative of the logarithm of the maximization problem above, i.e.\ \eqref{eq:leja1log} is solved instead of \eqref{eq:leja1}. By determining all local maxima between two consecutive nodes in parallel, large numbers of nodes can be calculated fast and accurately (as the maximization function is smooth between two nodes). Numerical cancellation is kept minimal by using extended precision arithmetic (with machine epsilon approximately $10^{-19}$).

The definition of univariate Leja nodes can be generalized to a multidimensional setting in a similar way as we did in Section~\ref{subsubsec:multivariate} with interpolation. To this end, let $\rho: \mathbb{R}^d \rightarrow \mathbb{R}_+$ be a multivariate PDF. Let $\mathbf{x}_0 \in \mathbb{R}^d$ be an initial node with $\rho(\mathbf{x}_0) > 0$. Then given the nodes $\mathbf{x}_0, \dots, \mathbf{x}_N$, the next node $\mathbf{x}_{N+1}$ is defined as follows:
\begin{equation}
	\label{eq:leja2}
	\mathbf{x}_{N+1} \coloneqq \argmax_{\mathbf{x} \in \mathbb{R}^d} \rho(\mathbf{x}) \left| \det V(\mathbf{x}_0, \dots, \mathbf{x}_N, \mathbf{x}) \right|.
\end{equation}
Here, $V$ is the Vandermonde-matrix defined in Section~\ref{subsubsec:multivariate}. The absolute value of the determinant of $V$ is independent of the set of polynomials that is used to construct $V$, so the definition is mathematically the same for both monomials and orthogonal polynomials.

Determining multivariate Leja nodes is less trivial compared to univariate nodes and is typically done by randomly (or quasi-randomly) sampling the space of interest and selecting the node that results in the highest determinant. It is significantly more complicated to reliably apply Newton's method in this case, as the space cannot be easily partioned in regions where the local maxima reside. To reach a comparable accuracy, it is important to be able to use a large number of samples, so it must be possible to calculate the determinant fast. We suggest to calculate the determinant by an extended QR factorization of the $(N+2) \times (N+1)$-matrix $V(\mathbf{x}_0, \dots, \mathbf{x}_N)$ and to add the column containing $\mathbf{x}$ by applying a rank-1 update. If a QR factorization has been calculated to determine the interpolating polynomial (see Section~\ref{subsubsec:multivariate}), it can be reused here. As a rank-1 update is an efficient procedure, a large number of samples can be used and therefore we assume that the approximation error is negligible in this case. Examples of Leja sequences defined by \eqref{eq:leja2} can be found in Figure~\ref{fig:multileja}.

\begin{figure}
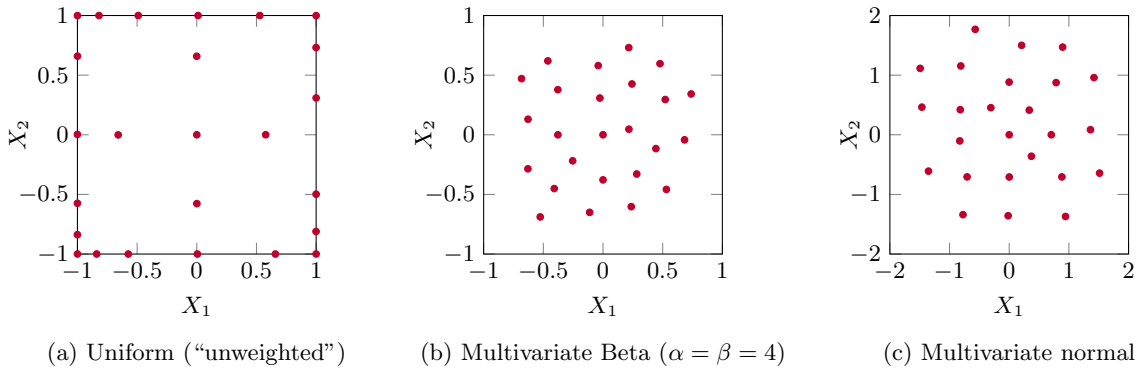

	\centering
	\begin{minipage}{.32\textwidth}
		\small
		\centering
		\includepgf{.9\textwidth}{.9\textwidth}{leja2.uniform.tikz}
		\subcaption{Uniform (``unweighted'')}
	\end{minipage}
	\begin{minipage}{.32\textwidth}
		\small
		\centering
		\includepgf{.9\textwidth}{.9\textwidth}{leja2.beta.tikz}
		\subcaption{Multivariate Beta ($\alpha = \beta = 4$)}
	\end{minipage}
	\begin{minipage}{.32\textwidth}
		\small
		\centering
		\includepgf{.9\textwidth}{.9\textwidth}{leja2.stdgauss.tikz}
		\subcaption{Multivariate normal}
	\end{minipage}

	\caption{Multivariate Leja sequences of 25 nodes using various well-known distributions.}
	\label{fig:multileja}
\end{figure}

\subsection{Calibration using Leja nodes}
\label{subsec:weightingfnc}
In this section we will derive a weighting function to be used in the interpolation procedure discussed in the previous section, with the goal to approximate the posterior. Theoretical details are provided in Section~\ref{sec:convergence}. First we discuss the rationale behind the weighting function in Section~\ref{subsubsec:rationale}. The weighting function itself is presented in Section~\ref{subsubsec:weightingfunc} and the mathematical derivation it is based upon is presented in Section~\ref{subsubsec:mvt}. The weighting function has one free parameter, which is discussed in more detail in Section~\ref{subsubsec:zeta}.

\subsubsection{Rationale}
\label{subsubsec:rationale}
If the posterior is known explicitly and samples can be readily drawn from it, it is possible to determine weighted Leja nodes with weighting function $\rho(\theb) = p(\theb \mid \mathbf{z})$. These nodes provide an interpolant that is very suitable for evaluating integrals with respect to the posterior (this is commonly known as Bayesian prediction). However, the posterior is generally not explicitly available because it depends on the model $u$, which in itself is not known explicitly and can only be determined on (finitely many) nodes. Therefore the need arises for an interpolation sequence that approximates $u$ such that the posterior determined with this approximation is accurate.

To this end, let $p_N(\theb \mid \mathbf{z})$ be the posterior determined using $u_N(\theb)$, i.e.\ the interpolant of $u$ using $N+1$ nodes in $\theb$. If the likelihood is according to \eqref{eq:likelihood}, $p_N$ is as follows:
\begin{equation}
	p_N(\theb \mid \mathbf{z}) \propto p(\theb) \exp\left[ -\frac{1}{2} \trans{\mathbf{d}} \Sigma^{-1} \mathbf{d} \right], \text{with $\mathbf{d}$ a vector such that $d_k = z_k - u_N(\theb)$}.
\end{equation}
We will use the definition of the weighted Leja nodes from \eqref{eq:leja2} to determine the next node. The natural idea is to construct $p_{N+1}(\theb \mid \mathbf{z})$ (i.e.\ a new approximation of the posterior) by determining a new weighted Leja node using $p_N(\theb \mid \mathbf{z})$ (i.e.\ the existing approximation of the posterior). Such a sequence can be numerically unstable, because it solely places nodes in regions where the approximate posterior is high and therefore yields spurious oscillations in other regions in the domain.

The key idea is to balance between the accuracy of the interpolant and the accuracy of the posterior. There are various methods to do this, but we choose to temper the effect of the (possibly inaccurate) approximate posterior by adding a constant value $\zeta$ to it. The higher this $\zeta$, the more the posterior tends to the prior. In Section~\ref{subsec:accuracyofnodalsets} it is demonstrated that for any $\zeta > 0$, the interpolant constructed with these weighted Leja nodes has (at least) the same asymptotic convergence rate as an interpolant determined with weighted Leja nodes without adaptivity. If $\zeta$ is chosen correctly, the approximate posterior is already accurate for moderately small $N$.

\subsubsection{The adaptive weighting function}
\label{subsubsec:weightingfunc}
To introduce this construction formally, we assume that a function $P: \mathbb{R} \rightarrow \mathbb{R}_+$ exists such that
\begin{equation}
	\label{eq:definitionofP}
	p(\mathbf{z} \mid \theb) = P(u(\theb)),
\end{equation}
where $P$ typically is a PDF which follows from the statistical model. In the example discussed in \eqref{eq:likelihood} $P$ is a Gaussian PDF, i.e.
\begin{equation}
	\label{eq:gaussianP}
	P: \mathbb{R} \to \mathbb{R}_+, \text{ with } P(u) \propto \exp\left[-\frac{1}{2} \trans{\mathbf{d}} \Sigma^{-1} \mathbf{d}\right]\text{ and } d_k = z_k - u.
\end{equation}

We assume that the function $P$ is globally Lipschitz continuous and continuously differentiable. Many statistical models used in a statistical setting yield Lipschitz continuous $P$, because a bounded continuously differentiable function $P(u)$ with $P'(u) \to 0$ for $u \to \pm \infty$ is Lipschitz continuous. The domain of definition of $P$ is the image of the model $u(\theb)$, so functions $P$ that are only Lipschitz continuous in the set described by the image of $u(\theb)$ also fit in this framework (for example the Gamma distribution on the positive real axis).

The weighting function proposed in this article, called $q_N$, clearly depends on $N$:
\begin{equation}
	\label{eq:definitionofQ}
	q_N(\theb \mid \mathbf{z}) = |P'(u_N(\theb))| p(\theb) + \zeta p(\theb), \text{ where $\zeta > 0$ is a free parameter}.
\end{equation}
So, if $\theb_0, \dots, \theb_N$ are the first $N+1$ Leja nodes, $\theb_{N+1}$ is determined as follows:
\begin{equation}
	\label{eq:lejaadaptive}
	\theb_{N+1} = \argmax_\theb q_N(\theb \mid \mathbf{z}) \left| \det V(\theb_0, \dots, \theb_N, \theb) \right|.
\end{equation}
Here the derivative $P'$ is with respect to $u$, i.e.
\begin{equation}
	P'(u(\theb)) = \frac{\partial P}{\partial u} (u(\theb)).
\end{equation}
We want to emphasize that for the evaluation of $P'(u_N(\theb))$ no costly evaluation of the full model $u$ is necessary, since $P'$ is independent of $u$. In the example from \eqref{eq:likelihood}, $P'$ becomes the following:
\begin{equation}
	\label{eq:dPdu}
	P': \mathbb{R} \to \mathbb{R}, \text{ with } P'(u) \propto -\frac{1}{2} \left(\trans{\mathbf{1}} \Sigma^{-1} \mathbf{d} + \trans{\mathbf{d}} \Sigma^{-1} \mathbf{1}\right) \exp\left[ -\frac{1}{2} \trans{\mathbf{d}} \Sigma^{-1} \mathbf{d}\right] \text{ and } d_k = z_k - u,
\end{equation}
with $\mathbf{1} = \trans{(1, 1, \dots, 1)} \in \mathbb{R}^n$.

\subsubsection{Mean value theorem}
\label{subsubsec:mvt}
The weighting function $q_N$ as defined in \eqref{eq:definitionofQ} follows naturally by applying the mean value theorem to the error of the approximate posterior. This introduces the derivative $P'$ in the expression. To this end, let a fixed $\theb$ be given, and apply the mean value theorem as follows:
\begin{equation}
\label{eq:mvt}
\begin{aligned}
	|p_N(\theb \mid \mathbf{z}) - p(\theb \mid \mathbf{z})| &= |P(u_N(\theb)) - P(u(\theb))| p(\theb) \\
	&= |P'(\xi)| |u_N(\theb) - u(\theb)| p(\theb) \\
	&= |P'(u_N(\theb)) + \zeta_\theb| |u_N(\theb) - u(\theb)| p(\theb),
\end{aligned}
\end{equation}
with $\xi$ an (unknown) value between $u_N(\theb)$ and $u(\theb)$ and $\zeta_\theb = P'(\xi) - P'(u_N(\theb))$. Essentially $\zeta_\theb$ is used to represent higher order derivatives of $P$ in this expression. The value of $\zeta_\theb$ depends on $\theb$ and on the model $u$, which is not explicitly known. By further expanding $P'$, it can be shown that $\zeta_\theb$ scales with $|u_N(\theb) - u(\theb)|$, provided that $P$ is twice differentiable with bounded second order derivative:
\begin{equation}
	\zeta_\theb = P'(\xi) - P'(u_N(\theb)) = \frac{1}{2} P''(\widehat{\xi})(u_N(\theb) - u(\theb)), \text{ for a $\widehat{\xi}$ between $P'(u_N(\theb))$ and $P'(u(\theb))$}.
\end{equation}
Hence if $u_N(\theb) \to u(\theb)$ for $N \to \infty$ and $P''$ bounded (or: the divided difference of $P'$ is bounded), it holds that $\zeta_\theb \to 0$ for $N \to \infty$. In this work, the constant $\zeta_\theb$ is used to measure how far the likelihood of the interpolant is from the likelihood of the true model. The idea is to add a Leja node $\theb_{N+1}$ where the error in the posterior is large, though such that the interpolant remains stable. The weighting function $q_N$ as introduced before follows by taking the $\infty$-norm in $\theb$ on both sides of \eqref{eq:mvt}:
\begin{equation}
\label{eq:mvtinf}
\begin{aligned}
	\| p_N(\theb \mid \mathbf{z}) - p(\theb \mid \mathbf{z}) \|_\infty &= \| P(u_N(\theb)) - P(u(\theb)) \|_\infty \\
	&= \| |P'(\xi)| (u_N(\theb) - u(\theb)) p(\theb) \|_\infty \\
	&\leq \| \bigl( |P'(u_N(\theb))| + \zeta \bigr) \bigl( u_N(\theb) - u(\theb) \bigr) p(\theb) \|_\infty \\
	&= \| (u_N(\theb) - u(\theb)) q_N(\theb) \|_\infty,
\end{aligned}
\end{equation}
with $\zeta \geq |\zeta_\theb| = |P'(\xi) - P'(u_N(\theb))|$ for all $\theb$.

The algorithm proposed in this article is to (iteratively) firstly determine $q_N$, secondly determine $\theb_{N+1}$ using \eqref{eq:lejaadaptive}, and finally determine $u(\theb_{N+1})$ and reconstruct the interpolant (which yields $u_{N+1}$ and consequently $p_{N+1}(\theb \mid \mathbf{z})$). This algorithm is sketched in Figure~\ref{fig:algorithm}. Convergence can be assessed in various ways, for example using the $\infty$-norm or the Kullback--Leibler divergence. We will use the $\infty$-norm, as determining the Kullback--Leibler divergence in higher-dimensional spaces is numerically challenging.

\begin{figure}[t]
	\centering
	\footnotesize
	
	\def\svgwidth{.8\textwidth}
	\input{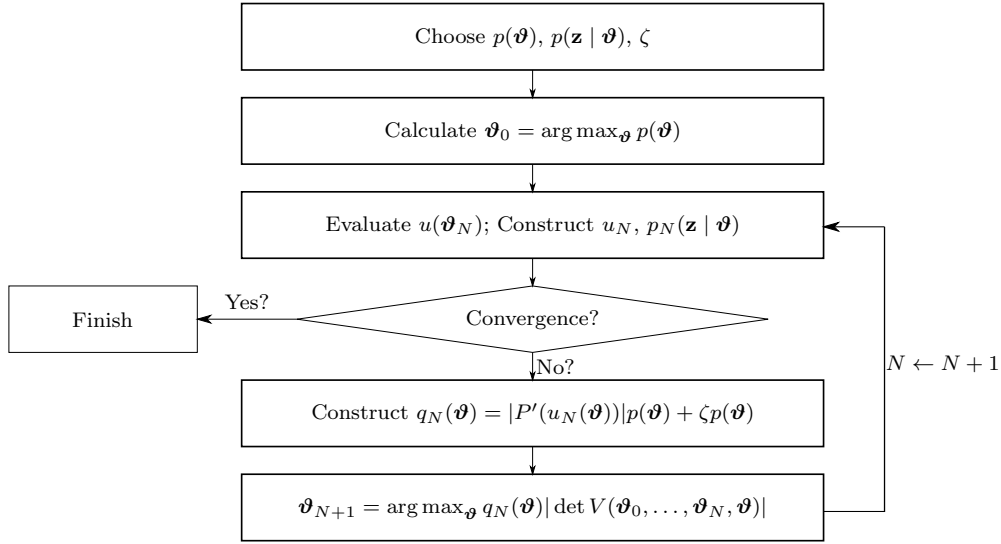}

	\caption{Schematic overview of the algorithm proposed in this article.}
	\label{fig:algorithm}
\end{figure}

The exact value of $\zeta_\theb$ is not known a priori and depends on $\theb$. Nonetheless, we will demonstrate that for any $\zeta > 0$ it holds that $\| u - u_N \|_\infty \to 0$ (for $N \to \infty$), provided that ``conventional'' weighted Leja nodes produce a converging interpolant. If $u_N \to u$ for $N \to \infty$, the exact value of $\zeta$ converges to $0$, hence any value of $\zeta$ will work for sufficiently large $N$. We will further study the convergence of this method in Section~\ref{sec:convergence}.

\begin{figure}[t]
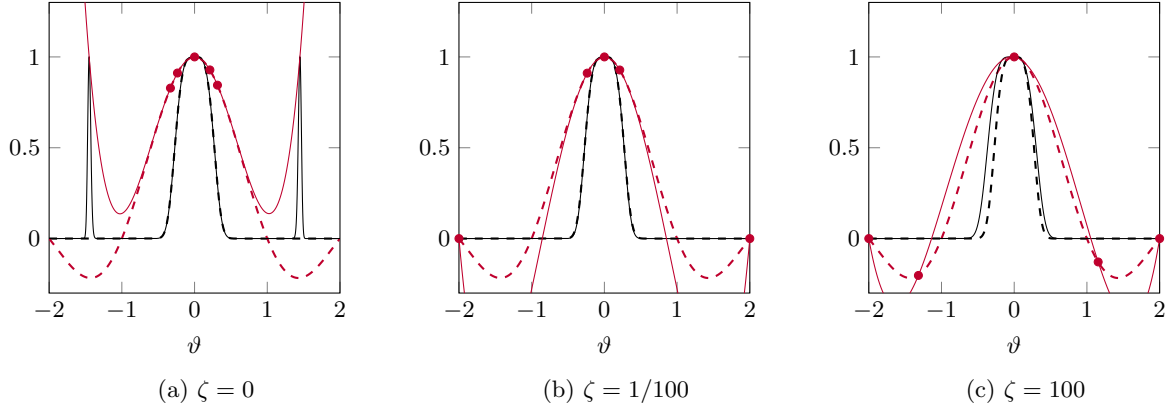

	\begin{minipage}{.33\textwidth}
		\includepgf{\textwidth}{\textwidth}{example-direct-leja-0.tikz}
		\subcaption{$\zeta = 0$}
	\end{minipage}%
	\begin{minipage}{.33\textwidth}
		\includepgf{\textwidth}{\textwidth}{example-direct-leja-1-100.tikz}
		\subcaption{$\zeta = 1/100$}
	\end{minipage}%
	\begin{minipage}{.33\textwidth}
		\includepgf{\textwidth}{\textwidth}{example-direct-leja-100.tikz}
		\subcaption{$\zeta = 100$}
		\label{subfig:example-direct-leja-100}
	\end{minipage}%

	\caption{Interpolation of the $\mathrm{sinc}$ function using 5 weighted Leja nodes with respect to the posterior using a tempering parameter $\zeta$. The model $u(\vartheta)$ and (unscaled) posterior $p(\vartheta \mid z_1)$ are depicted in red and black respectively. The solid line represents the result constructed by means of interpolation and the ``true'' model and posterior are depicted using a dashed line.}
	\label{fig:example-direct-leja}
\end{figure}

\subsubsection{\texorpdfstring{Choice of $\boldsymbol\zeta$}{Choice of zeta}}
\label{subsubsec:zeta}
To illustrate the behavior of weighting function \eqref{eq:definitionofQ}, examples of interpolants obtained using Leja nodes weighted using $q_N$ in conjunction with the exact posterior are depicted in Figure~\ref{fig:example-direct-leja}. Here the parameter $\vartheta$ of the univariate function $u(\vartheta) = \mathrm{sinc}(\vartheta) = \sin(\vartheta) / \vartheta$ is ``calibrated'' using the Gaussian likelihood from \eqref{eq:gaussianP} with $\sigma = 1/10$, a uniform prior defined on $[-2, 2]$, and one data point at $z_1 = 1$. Hence the exact posterior is as follows:
\begin{equation}
	p(\vartheta \mid z_1) \propto \begin{cases}
		\exp\left[ -\frac{1}{2 \sigma^2} |u(\vartheta) - z_1|^2 \right] &\text{if $|\vartheta| \leq 2$}, \\
		0 &\text{otherwise}.
	\end{cases}
\end{equation}
The weighting function under consideration is $q_N(\vartheta) = |P'(u(\vartheta))| + \zeta$, where $u$ is used instead of $u_N$ to illustrate the effect of $\zeta$.

If $\zeta = 0$ (no tempering) the interpolant is indeed accurate with respect to the posterior (i.e.\ the weighted $p(\vartheta \mid z_1)$-norm), but yields an incorrect approximate posterior because the interpolant crosses the value of the data incorrectly around $\vartheta = \pm 1.5$. These spurious oscillations disappear for larger $N$, but for different test cases this is not necessarily the case (as it requires global analyticity). For $\zeta = 100$, it is guaranteed that $\zeta \geq |P'(\xi) - P'(u_N(\vartheta))|$ for all $\vartheta$, but the nodes determined with that value are, due to the large variations in the determinant of the Vandermonde-matrix, not sensitive to small variations in the approximate posterior, and are therefore pointwise close to unweighted Leja nodes (e.g.\ compare Figure~\ref{subfig:example-direct-leja-100} with Figure~\ref{subfig:leja-uniform}). The best strategy is to take a small non-zero value of $\zeta$, which balances posterior accuracy with stability. For such a small non-zero value, the second and third node are basically unweighted Leja nodes (and end up on the boundary). This does demonstrate the importance of tempering on the effect of the approximate posterior, which becomes especially important if the function $u$ is not globally analytic (but ``only'' continuous).

The key point in obtaining a converging interpolant is that $\zeta > 0$. If $\zeta = 0$, the inaccuracy of $u_N$ can significantly deteriorate the convergence (see Figure~\ref{fig:example-direct-leja}), except possibly if $u$ is globally analytic. If the goal is to optimize $\zeta$, we suggest a heuristically adaptive approach. Start with $\zeta = \zeta_0 > 0$ and for each iteration, multiply $\zeta$ with a constant $k > 1$ if the error in the posterior increases and divide $\zeta$ by $k$ if the interpolation error decreases. The error can be estimated by comparing two consecutive approximate posteriors. This procedure is however not necessary to obtain convergence for the examples in this article, for which a fixed value of $\zeta$ is sufficient.

\section{Convergence of the posterior}
\label{sec:convergence}
In this section the convergence of the estimated posterior to the true posterior is studied, denoted as follows:
\begin{equation}
	\| p_N(\theb \mid \mathbf{z}) - p(\theb \mid \mathbf{z}) \|_\infty \to 0, \text{ for $N \to \infty$}.
\end{equation}
It is difficult to theoretically demonstrate that this is case, since the convergence rate of interpolants constructed with Leja nodes is only known in some specific cases. However, we will demonstrate that the convergence rate of an interpolant determined with adaptively weighted Leja nodes is similar to one determined with Leja nodes without adaptivity, such that all results on the convergence of these conventional Leja nodes carry over.

The analysis is split into two parts. First, in Sections~\ref{subsec:lebesgue} and \ref{subsec:accuracyofnodalsets} the focus is on the model, i.e.\ it is assessed in which cases $\| u_N - u \|_{p(\theb)} \to 0$ for $N \to \infty$ (where $p(\theb)$ denotes the prior). In Section~\ref{subsec:lebesgue} convergence properties of interpolation methods are briefly reviewed. In Section~\ref{subsec:accuracyofnodalsets} the focus is specifically on Leja nodes, a case that will be assessed numerically. Moreover, the close relation between adaptively weighted Leja nodes and Leja nodes without adaptivity is considered.

The second part of the analysis consists of demonstrating that the posterior converges if the interpolant converges. Specifically, in Section~\ref{subsec:convergence} the following is demonstrated:
\begin{equation}
	\| p_N(\theb \mid \mathbf{z}) - p(\theb \mid \mathbf{z}) \|_\infty \leq D \| u_N - u \|_{p(\theb)}, \text{ with $D$ a constant independent of $N$}.
\end{equation}
The conventional way of describing the distance between two distributions is by means of the Kullback--Leibler divergence. In Section~\ref{subsec:KLconvergence} it is proved that if the interpolant converges to the true model, the Kullback--Leibler divergence between the approximate posterior and the true posterior converges to zero. Moreover, the rate of convergence doubles.

\subsection{Accuracy of interpolation methods}
\label{subsec:lebesgue}
The accuracy of interpolation methods can be assessed in two ways: using \emph{pointwise} error bounds which are typically based on Taylor expansions and \emph{global} error bounds which are typically based on the Lebesgue inequality. We will use the latter type.

Let $\mathbb{P}(K) = \mathbb{P}(K, 1)$, i.e.\ all univariate polynomials of degree less than or equal to $K$. We assume $u \in C[-1, 1]$ (i.e.\ a continuous function defined in $[-1, 1]$) and $\| u \|_\infty < \infty$ if not stated otherwise. It is well-known that $C[-1, 1]$ equipped with the norm $\| \cdot \|_\infty$ forms a Banach space.

Let $X_N = \{x_0, \dots, x_N\} \subset [-1, 1]$ be a set of interpolation nodes and let $L_N: C[-1,1] \rightarrow \mathbb{P}(N)$ be the Lagrange interpolation operator (see \eqref{eq:lagrange}) that determines the interpolating polynomial given the nodal set $X_N$. Then for any polynomial $\varphi_N$ of degree $N$ we have
\begin{equation}
	\| L_N u - u \|_\infty \leq (1 + \| L_N \|_\infty) \| \varphi_N - u \|_\infty,
\end{equation}
where $\Lambda_N \coloneqq \| L_N \|_\infty = \sup_{x \in [-1, 1]} \sum_{k=0}^N |\ell^N_k(x)|$ is the operator norm of $L_N$ induced by the norm $\| \cdot \|_\infty$ discussed above. $\Lambda_N$ is called the Lebesgue constant \cite{Ibrahimoglu2016}. The inequality holds for any polynomial $\varphi_N$, so an immediate result is the Lebesgue inequality:
\begin{equation}
	\label{eq:lebesgue}
	\| L_N u - u \|_\infty \leq (1 + \Lambda_N) \inf_{\varphi_N \in \mathbb{P}(N)} \| \varphi_N - u \|_\infty.
\end{equation}

The obtained expression contains a part that solely depends on the nodes, i.e.\ $(1 + \Lambda_N)$, and a part that solely depends on the function $u$, i.e.\ $\inf_{\varphi_N \in \mathbb{P}(N)} \| \varphi_N - u \|_\infty$. The second part is commonly known as the best approximation error. In the procedure used for calibration, nodes are determined using a weighting function $\rho$. To assess the accuracy of nodes that use weighting, we reconsider the weighted $\infty$-norm $\| u \|_\rho = \| \rho \, u \|_\infty$. Here, we assume that $\rho(x): \Omega \rightarrow \mathbb{R}_+$ is a bounded PDF\footnote{The boundedness is not strictly necessary for $C(\Omega)$ to be a Banach space, but for the cases discussed in this article boundedness suits.}, such that that $C(\Omega)$ equipped with the norm $\| \cdot \|_\rho$ forms a Banach space. The support of $u$ (say $\Omega$) is allowed to be unbounded, in contrast to the unweighted case. The unweighted case is a special case of the weighted case.

Using this norm we can derive a similar estimate as \eqref{eq:lebesgue} by introducing \cite{Jantsch2016}:
\begin{equation}
	\Lambda^\rho_N \coloneqq \| L_N \|_\rho = \sup_{x \in \mathbb{R}} \sum_{k=0}^N \frac{\rho(x)}{\rho(x_k)} |\ell^N_k(x)|.
\end{equation}
Here, $\Lambda_N^\rho$ is called the weighted Lebesgue constant, i.e.\ the norm of the operator $u \rightarrow \rho L_N (u / \rho)$. We call the result the weighted Lebesgue inequality:
\begin{equation}
	\label{eq:wlebesgue}
	\| L_N u - u \|_\rho \leq (1 + \Lambda_N^\rho) \inf_{\varphi_N \in \mathbb{P}(N)} \| \varphi_N - u \|_\rho.
\end{equation}

The Lebesgue inequality does not readily provide means to estimate the order of convergence. For this purpose, Jackson's inequality~\cite{Jackson1982,Passow1970} can be used, that relates the best approximation error to the modulus of continuity of $u$. Important for this work is that if $u$ is Lipschitz continuous (or continuous and bounded on a compact domain) then a sublinearly growing Lebesgue constant provides a converging interpolant. Various other results on this topic exist, the interested reader is referred to the accessible introduction in the book of \citet{Watson1980} and the references therein for more information.

\subsection{Lebesgue constant of Leja nodes}
\label{subsec:accuracyofnodalsets}
\begin{figure}
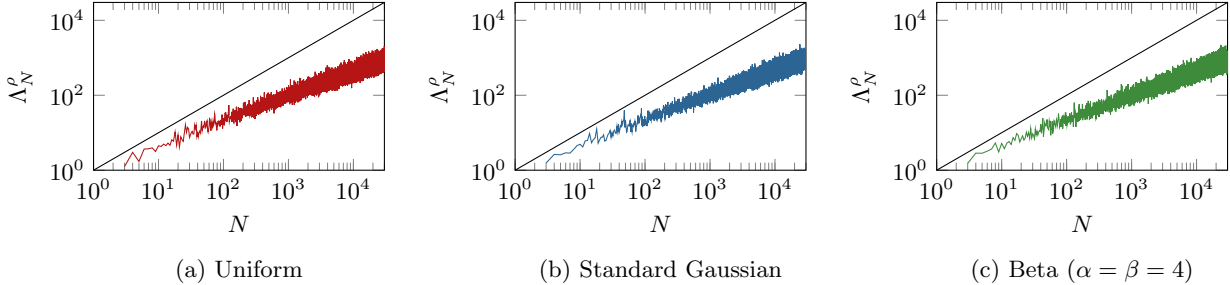

	\begin{minipage}{.33\textwidth}
		\centering
		\small
		\includepgf{\textwidth}{.7\textwidth}{lebesgue-1.tikz}
		\subcaption{Uniform}
		\label{fig:lebesgue-uniform}
	\end{minipage}~
	\begin{minipage}{.33\textwidth}
		\centering
		\small
		\includepgf{\textwidth}{.7\textwidth}{lebesgue-2.tikz}
		\subcaption{Standard Gaussian}
		\label{fig:lebesgue-stdgauss}
	\end{minipage}~
	\begin{minipage}{.33\textwidth}
		\centering
		\small
		\includepgf{\textwidth}{.7\textwidth}{lebesgue-3.tikz}
		\subcaption{Beta ($\alpha=\beta=4$)}
		\label{fig:lebesgue-beta}
	\end{minipage}

	\caption{The weighted Lebesgue constant of weighted Leja nodes for three different distributions. The solid line depicts $\Lambda_N = N$.}
	\label{fig:lebesgue}
\end{figure}
It is both an advantage and a disadvantage that the Lebesgue constant solely depends on the nodal set: we do not have to take the model into account to estimate the accuracy, but the resulting estimate does not leverage any properties of the model. The algorithm discussed in Section~\ref{subsec:weightingfnc} does use the model and therefore cannot be fit directly in the framework set out in the previous section.

Many nodal sets exist with a logarithmically growing Lebesgue constant, which is asymptotically the optimal growth. For example, Chebyshev nodes (i.e.\ the nodes from the Clenshaw--Curtis quadrature rule) have $\Lambda_N = \mathcal{O}(\log N)$ \cite{Ibrahimoglu2016}. Moreover, we already stated that the Chebyshev nodes are nested such that the nodes for $N = 2^l+1$ (for integer $l$) are contained in the nodes for $N = 2^{l+1}+1$. However, the Chebyshev nodes are only defined in an \emph{unweighted} setting. Another well-known example are equidistant nodes, which have an exponentially growing Lebesgue constant. This can be observed by interpolating Runge's function.

Although it is known that the Lebesgue constant of both weighted and unweighted Leja sequences grows sub-exponentially \cite{Taylor2008,Jantsch2016,Taylor2010}, the exact growth (or a strict upper bound) is not known. We have therefore numerically determined the Lebesgue constant for $N$ up to $30\,000$ for several distributions and observed $\Lambda_N^\rho \leq N$, see Figure~\ref{fig:lebesgue}. For the purpose of these figures, the Leja nodes and the Lebesgue constant have been determined by applying Newton's method to the derivative of the function, as described in Section~\ref{subsec:weightedlejanodes}.

\begin{figure}
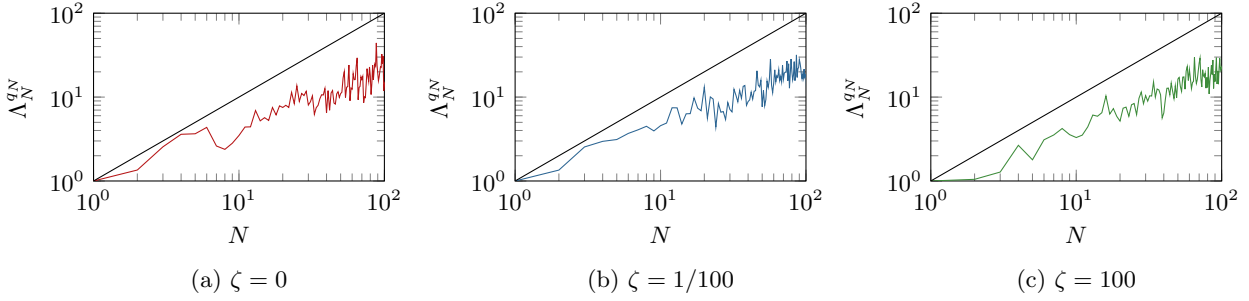

	\begin{minipage}{.33\textwidth}
		\centering
		\small
		\includepgf{\textwidth}{.7\textwidth}{lebesgue-adapt-0.tikz}
		\subcaption{$\zeta = 0$}
		\label{fig:lebesgue-adapt-0}
	\end{minipage}~
	\begin{minipage}{.33\textwidth}
		\centering
		\small
		\includepgf{\textwidth}{.7\textwidth}{lebesgue-adapt-0-01.tikz}
		\subcaption{$\zeta = 1/100$}
		\label{fig:lebesgue-adapt-0-01}
	\end{minipage}
	\begin{minipage}{.33\textwidth}
		\centering
		\small
		\includepgf{\textwidth}{.7\textwidth}{lebesgue-adapt-100.tikz}
		\subcaption{$\zeta = 100$}
		\label{fig:lebesgue-adapt-100}
	\end{minipage}

	\caption{The weighted Lebesgue constant of adaptively weighted Leja nodes using a Gaussian likelihood with $\sigma=1/10$, a uniform prior on $[-2, 2]$, a single data point $z_1=1$, and the function $u(\vartheta) = \mathrm{sinc}(\vartheta)$. The solid line depicts $\Lambda_N = N$.}
	\label{fig:lebesgue-adapt}
\end{figure}
Except for some specific cases (such as the unit disk \cite{Calvi2011}), not much is known about the Lebesgue constant in the multivariate case. Unfortunately, as far as the authors know, it is difficult to determine the Lebesgue constant of multivariate Leja nodes accurately enough to create a similar plot as Figure~\ref{fig:lebesgue}. For small number of nodes ($N \lesssim 100$) and low dimensionality ($d \lesssim 5$) the Lebesgue constants seem to grow similarly, but the sampling procedure significantly deteriorates the accuracy. Moreover this number is too small to draw conclusions about general asymptotic behavior. Nonetheless, the numerically determined growth of the Lebesgue constant is sufficient for the purposes in the current article (as $N \lesssim 100$ throughout this article). We want to emphasize that contrary to the Lebesgue constant, large numbers of multivariate Leja nodes can be determined efficiently by means of sampling, as evaluating the maximization function is significantly more straightforward (see Section~\ref{subsec:weightedlejanodes}).

These results do not carry over straightforwardly to the case of adaptively weighted Leja nodes where the weighting function depends on the number of nodes. However, for reasonably small $N$ the Lebesgue constant can be assessed numerically. To this end, we have determined the Lebesgue constant $\Lambda^{q_N}_N$, i.e.\ the Lebesgue constant weighted with $q_N$ from \eqref{eq:definitionofQ}, of adaptively weighted Leja nodes using the aforementioned example of a Gaussian likelihood in conjunction with the function $u(\vartheta) = \mathrm{sinc}(\vartheta)$ (see Figure~\ref{fig:lebesgue-adapt}). Determining Leja nodes in this case is still relatively straightforward, but determining the Lebesgue constant accurately is significantly less trivial due to the varying weight function, so we limit ourselves to 100 nodes. Even though the weighting function now depends on the number of nodes, it appears that the Lebesgue constant still grows sublinearly. This result, in conjunction with the numerical results from Section~\ref{sec:numerics}, indicates that weighted Leja nodes as proposed in this article indeed yield an interpolant that can be used to construct an accurate approximate posterior. Notice that slow growth of $\Lambda^{q_N}_N$ implies slow growth of $\Lambda^p_N$ and vice versa (where $p$ denotes the prior), which can be seen as follows:
\begin{equation}
	\label{eq:adaptivebound}
	\begin{aligned}
		\Lambda^p_N &= \sup_{\theb \in \Omega} \sum_{k=0}^N \frac{p(\theb)}{p(\theb_k)} |\ell^N_k(\theb)| \leq \sup_{\theb \in \Omega} \sum_{k=0}^N \frac{\|P'\|_\infty + \zeta}{\zeta} \frac{q_N(\theb)}{q_N(\theb_k)} |\ell^N_k(\theb)| \leq \left(1 + \frac{\| P' \|_\infty}{\zeta}\right) \Lambda^{q_N}_N, \\
		\Lambda^{q_N}_N &= \sup_{\theb \in \Omega} \sum_{k=0}^N \frac{q_N(\theb)}{q_N(\theb_k)} |\ell_k^N(\theb)| \leq \sup_{\theb \in \Omega} \sum_{k=0}^N \frac{\|P'\|_\infty + \zeta}{\zeta} \frac{p(\theb)}{p(\theb_k)} |\ell^N_k(\theb)| \leq \left(1 + \frac{\| P' \|_\infty}{\zeta}\right) \Lambda^p_N.
	\end{aligned}
\end{equation}
Furthermore, this expression can be used to demonstrate that results about the growth of the Lebesgue constant to a certain extent carry over to the setting of adaptively determined nodes. To see this, notice that if $\theb \in \Omega$ is given and $\theb_0, \dots, \theb_N$ are adaptively weighted Leja nodes, it holds for all $k = 0, \dots, N$ that
\begin{align}
	\zeta p(\theb) |\det V(\theb_0, \dots, \theb_{k-1}, \theb)| &\leq q_k(\theb) |\det V(\theb_0, \dots, \theb_{k-1}, \theb)| \\
	&\leq (\zeta + \| P' \|_\infty) p(\theb_k) |\det V(\theb_0, \dots, \theb_{k-1}, \theb_k)|.
\end{align}
Hence let $q(\theb)$ be as follows:
\begin{equation}
	q(\theb) = \begin{cases}
		\zeta + \| P' \|_\infty &\text{if $\theb = \theb_k$ for any $k = 0, \dots, N$}, \\
		\zeta &\text{otherwise}.
	\end{cases}
\end{equation}
Then it holds for all $k = 0, \dots, N$ that
\begin{equation}
	q(\theb) |\det V(\theb_0, \dots, \theb_{k-1}, \theb)| \leq q(\theb_k) |\det V(\theb_0, \dots, \theb_{k-1}, \theb_k)|.
\end{equation}
Hence there exists a single weighting function $q$ that defines these nodes. Moreover, following the same derivation as \eqref{eq:adaptivebound}, it holds that
\begin{equation}
	\label{eq:lebesguebound}
	\Lambda^p_N \leq \left(1 + \frac{\| P' \|_\infty}{\zeta} \right) \Lambda^q_N \text{ and } \Lambda^q_N \leq \left( 1 + \frac{\| P' \|_\infty}{\zeta} \right) \Lambda^p_N,
\end{equation}
where now $\Lambda^q_N$ is used instead of $\Lambda^{q_N}_N$ (i.e.\ the weighting function is independent from $N$).

Concluding, the Lebesgue constant of adaptively weighted Leja nodes grows asymptotically as least as slow as the Lebesgue constant of Leja nodes weighted with $q$. Furthermore, the growth of the Lebesgue constant weighted with the prior is similar to the growth of the Lebesgue constant weighted with $q$. As discussed before, it is difficult to assess these bounds theoretically, though the Lebesgue constant can often be assessed numerically. Notice that it is essential that $\zeta > 0$ for this result to hold, since otherwise the constant in \eqref{eq:lebesguebound} can become unbounded.

\subsection{Convergence of the posterior: the general case}
\label{subsec:convergence}
In this section we study convergence of the estimated posterior to the true posterior in the $\infty$-norm, given convergence of the interpolant. This demonstrates that, provided that the interpolant converges, a posterior constructed with the interpolant converges. 

As discussed previously, let $p(\theb)$, $p(\mathbf{z} \mid \theb)$, and $p(\theb \mid \mathbf{z})$ be the prior, likelihood, and posterior respectively. Let $u$ be the model, such that $u(\theb)$ is a model evaluation using a fixed set of parameters $\theb$. We assume an interpolant $u_N$ is given such that $\| u_N - u \|_{p(\theb)} \to 0$ for $N \to \infty$. Such an interpolant can for example be constructed with adaptively weighted Leja nodes, as discussed extensively in the previous section, but this is not explicitly assumed here (e.g.\ Leja nodes weighted with the prior also suit). Let $p_N(\mathbf{z} \mid \theb)$ and $p_N(\theb \mid \mathbf{z})$ be the likelihood and the posterior constructed with this interpolant, i.e.\ $p_N(\mathbf{z} \mid \theb) = P(u_N(\theb))$. The main result, stated in Theorem~\ref{thm:generalconvergence} below, is that if the interpolant converges to the model, the approximate posterior converges to the true posterior. We do not need differentiability of $P$ in this general case, but only require $P$ to be Lipschitz continuous.

\begin{theorem}
	\label{thm:generalconvergence}
	Let $u: \Omega \rightarrow \mathbb{R}$ be a continuous function and let $u_N$ be the interpolant of $u$ with $N$ nodes. Suppose
	\begin{equation}
		\| u_N - u \|_{p(\theb)} \leq C Q_N,
	\end{equation}
	with $Q_N \to 0$ for $N \to \infty$, and $C$ a positive constant (independent of $N$). Assume the likelihood (i.e.\ the function $P$) is Lipschitz continuous.

	Then
	\begin{equation}
		\| p_N(\theb \mid \mathbf{z}) - p(\theb \mid \mathbf{z}) \|_\infty \leq K Q_N,
	\end{equation}
	where $K$ is a positive constant.
\end{theorem}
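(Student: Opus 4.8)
The plan is to work with the unnormalized posteriors and then account for the normalizing constants (the evidences) separately, since multiplying a weighting function by a constant does not change the Leja sequence but the evidence does enter the $\infty$-norm estimate. Write $\widetilde{p}(\theb \mid \mathbf{z}) = P(u(\theb)) p(\theb)$ and $\widetilde{p}_N(\theb \mid \mathbf{z}) = P(u_N(\theb)) p(\theb)$ for the unnormalized posteriors, with evidences $Z = \int_\Omega P(u(\theb)) p(\theb) \dd \theb$ and $Z_N = \int_\Omega P(u_N(\theb)) p(\theb) \dd \theb$, so that $p(\theb\mid\mathbf{z}) = \widetilde{p}/Z$ and $p_N(\theb\mid\mathbf{z}) = \widetilde{p}_N/Z_N$.

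First I would bound the unnormalized difference. Since $P$ is Lipschitz with some constant $L$, for every $\theb$
\begin{equation}
	|\widetilde{p}_N(\theb \mid \mathbf{z}) - \widetilde{p}(\theb \mid \mathbf{z})| = |P(u_N(\theb)) - P(u(\theb))|\, p(\theb) \leq L\, |u_N(\theb) - u(\theb)|\, p(\theb) \leq L \| u_N - u \|_{p(\theb)} \leq L C Q_N,
\end{equation}
where the third inequality uses that $\| u_N - u \|_{p(\theb)} = \| p(\theb)(u_N - u) \|_\infty$. Taking the supremum gives $\| \widetilde{p}_N - \widetilde{p} \|_\infty \leq L C Q_N$. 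Integrating the same pointwise bound over $\Omega$ (using $\int_\Omega p(\theb)\dd\theb = 1$, which holds since the prior is proper) yields $|Z_N - Z| \leq L \| u_N - u \|_{p(\theb)} \leq L C Q_N$; in particular $Z_N \to Z > 0$, so for $N$ large enough $Z_N \geq Z/2$, and $Z_N$ is bounded below by a positive constant uniformly in $N$.

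Next I would split the normalized difference using the triangle inequality in the standard way:
\begin{equation}
	\| p_N(\theb \mid \mathbf{z}) - p(\theb \mid \mathbf{z}) \|_\infty = \left\| \frac{\widetilde{p}_N}{Z_N} - \frac{\widetilde{p}}{Z} \right\|_\infty \leq \frac{1}{Z_N} \| \widetilde{p}_N - \widetilde{p} \|_\infty + \| \widetilde{p} \|_\infty \left| \frac{1}{Z_N} - \frac{1}{Z} \right| = \frac{\| \widetilde{p}_N - \widetilde{p} \|_\infty}{Z_N} + \| \widetilde{p} \|_\infty \frac{|Z_N - Z|}{Z_N Z}.
\end{equation}
Substituting the three bounds above — $\| \widetilde{p}_N - \widetilde{p} \|_\infty \leq L C Q_N$, $|Z_N - Z| \leq L C Q_N$, $Z_N \geq Z/2$ — and noting $\| \widetilde{p} \|_\infty \leq \| P \|_\infty \| p \|_\infty < \infty$ (here $P$ is bounded as a PDF composed with $u$, and $p$ is a bounded PDF), gives
\begin{equation}
	\| p_N(\theb \mid \mathbf{z}) - p(\theb \mid \mathbf{z}) \|_\infty \leq \frac{2LC}{Z} Q_N + \frac{2 L C \| \widetilde{p} \|_\infty}{Z^2} Q_N = K Q_N,
\end{equation}
with $K = \frac{2LC}{Z}\bigl(1 + \| \widetilde{p} \|_\infty / Z\bigr)$, a positive constant independent of $N$. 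One should state this holds for $N$ large enough that $Z_N \geq Z/2$; for the finitely many smaller $N$ the bound can be absorbed by enlarging $K$, since each term is finite.

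The main obstacle is the bookkeeping around the evidence: one must be careful that $Z_N$ does not degenerate, which is exactly why $Z > 0$ and the convergence $Z_N \to Z$ are needed, and this is where the properness of the prior and the boundedness of $P$ are essential. Everything else is the routine "ratio of nearby quantities" estimate. A minor subtlety worth a sentence is justifying $\| \widetilde{p} \|_\infty < \infty$ and the interchange of supremum/integral with the pointwise Lipschitz bound, both of which follow from the standing assumptions that $P$ is (globally) Lipschitz and bounded on the range of $u$ and that $p$ is a bounded PDF.
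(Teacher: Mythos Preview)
Your proof is correct, and its core step---the pointwise Lipschitz bound $|P(u_N(\theb))-P(u(\theb))|\,p(\theb)\leq L\,|u_N(\theb)-u(\theb)|\,p(\theb)$ followed by taking the supremum---is exactly the argument the paper uses. The difference is scope: the paper works throughout with the \emph{unnormalized} posterior (consistent with the $\propto$ in Bayes' law and the later introduction of $\gamma,\gamma_N$ only in the Kullback--Leibler section), so its entire proof is your first displayed bound and stops there with $K=DC$. You instead interpret $p(\theb\mid\mathbf{z})$ and $p_N(\theb\mid\mathbf{z})$ as normalized densities and therefore add the evidence bookkeeping ($|Z_N-Z|\leq LCQ_N$, $Z_N\geq Z/2$ eventually, and the standard ratio splitting). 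This buys you a statement that is literally about probability densities rather than unnormalized kernels, at the cost of a longer argument and an extra mild hypothesis ($\|P\|_\infty\|p\|_\infty<\infty$, $Z>0$) that the paper never invokes for this theorem. Both routes are valid; yours is the more careful reading of ``posterior''.
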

\begin{proof}
	Recall the definition of $P$ from \eqref{eq:definitionofP}: $p(\mathbf{z} \mid \theb) = P(u(\theb))$. Let $D$ be the Lipschitz constant of $P$. Convergence readily follows:
	\begin{align*}
		\| p_N(\theb \mid \mathbf{z}) - p(\theb \mid \mathbf{z}) \|_\infty &= \| (p_N(\mathbf{z} \mid \theb) - p(\mathbf{z} \mid \theb)) p(\theb) \|_\infty \\
		&= \| [P(u_N(\theb)) - P(u(\theb))] p(\theb) \|_\infty \\
		&\leq D \| (u_N(\theb) - u(\theb)) p(\theb) \|_\infty \\
		&= D \| u_N(\theb) - u(\theb) \|_{p(\theb)} \\
		&\leq D C Q_N. \qedhere
	\end{align*}
\end{proof}
If $u_N$ converges to $u$ in the weighted $p(\theb)$-norm, the estimated posterior converges to the true posterior with at least the same rate of convergence, e.g.\ exponential if $Q_N \propto A^{-N}$ (for $A > 1$) and algebraic if $Q_N \propto N^{-\alpha}$ for $\alpha > 0$. This concludes the proof of Theorem~\ref{thm:generalconvergence}, extending previous work \cite{Marzouk2009,Birolleau2014} from Gaussian likelihoods to Lipschitz continuous likelihoods.

\subsection{Convergence of the posterior: Kullback--Leiber divergence}
\label{subsec:KLconvergence}
We assess the convergence properties of our algorithm using the Kullback--Leibler divergence, which is often used in a Bayesian setting to measure distance between distributions.

\subsubsection{Convergence of the Kullback--Leibler divergence}
Given two probability density functions $p(x)$ and $q(x)$ defined on a set $\Omega$, the Kullback--Leibler divergence is defined as follows:
\begin{equation}
	\DKL{p(x)}{q(x)} = \int_\Omega p(x) \log \frac{p(x)}{q(x)} \dd x.
\end{equation}
The Kullback--Leibler divergence is always positive, equals 0 if (and only if) $p \equiv q$, and is finite if $p(x) = 0$ implies $q(x) = 0$ (here, it is used that $\lim_{x \to 0} x \log x = 0$).

We are interested in proving bounds on $\DKL{p_N(\theb \mid \mathbf{z})}{p(\theb \mid \mathbf{z})}$ or $\DKL{p(\theb \mid \mathbf{z})}{p_N(\theb \mid \mathbf{z})}$. This is non-trivial due to the logarithm in the integral. To prove convergence, we first prove pointwise convergence of the logarithm, extend this to convergence of the integral using Fatou's lemma and finally conclude that convergence is attained. As the Kullback--Leibler divergence is defined for probability density functions, we have to incorporate the scaling of the posterior again. To this end, let $\gamma_N$ and $\gamma$ be defined as follows:
\begin{align}
	\label{eq:gamma}
	\gamma &= \int_\Omega p(\mathbf{z} \mid \theb) p(\theb) \dd \theb, \\
	\gamma_N &= \int_\Omega p_N(\mathbf{z} \mid \theb) p(\theb) \dd \theb.
\end{align}

To start off, the following lemma provides pointwise convergence of $\log \frac{p_N(x)}{p(x)}$. We omit the proof.
\begin{lemma}
	\label{lmm:pointwise}
	Let $g_n: \Omega \rightarrow \mathbb{R}$ be a series of functions with $g_n(x) \to g(x)$ for $n \to \infty$, for all $x \in \Omega$. Assume $g_n > 0$ for all $n$ and $g > 0$. Then
	\begin{equation}
		\log \frac{g_n(x)}{g(x)} \to 0, \text{ for $n \to \infty$, for all $x \in \Omega$}.
	\end{equation}
\end{lemma}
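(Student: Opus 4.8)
The plan is to observe that this is a pointwise statement for each fixed $x \in \Omega$, so everything reduces to a one-dimensional limit argument combined with continuity of the logarithm. First I would fix an arbitrary $x \in \Omega$ and treat $g(x)$ as a fixed strictly positive real number and $g_n(x)$ as a sequence of strictly positive reals converging to it.

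The key steps, in order: (i) from $g_n(x) \to g(x)$ and the fact that $g(x) > 0$ is a nonzero constant, conclude by the quotient rule for limits that $g_n(x)/g(x) \to g(x)/g(x) = 1$; note that $g_n(x)/g(x) > 0$ for every $n$, so the quotient lies in the domain of $\log$; (ii) invoke continuity of $t \mapsto \log t$ at $t = 1$, together with $\log 1 = 0$, to pass the limit through the logarithm and obtain $\log\!\bigl(g_n(x)/g(x)\bigr) \to \log 1 = 0$; (iii) since $x \in \Omega$ was arbitrary, the convergence holds for all $x \in \Omega$, which is the claim.

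There is essentially no serious obstacle here: the only things that need care are the hypotheses that guarantee the expressions are well defined, namely $g(x) > 0$ so that the division is legitimate, and $g_n(x) > 0$ (equivalently $g_n(x)/g(x) > 0$) so that the logarithm is defined along the whole sequence. The result is then immediate from elementary limit laws and the continuity of $\log$, which is why the paper omits the proof.
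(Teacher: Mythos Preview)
Your argument is correct and is exactly the elementary continuity-of-$\log$ argument one would expect; the paper in fact omits the proof entirely, so there is nothing further to compare.
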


Note that the generalization to \emph{uniform} convergence is not trivial. By definition the Kullback--Leibler divergence does not require uniform convergence, but only convergence in the integral. As the functions we are using are probability density functions, Fatou's lemma is handy. It is well-known and we omit the proof.

\begin{lemma}[Fatou's lemma]
	\label{lmm:fatou1}
	Let $f_1, f_2, \dots$ be a sequence of extended real-valued measurable functions. Let $f = \limsup_{n \to \infty} f_n$. If there exists a non-negative integrable function $g$ (i.e.\ $g$ measurable and $\int_\Omega g \dd \mu < \infty$) such that $f_n \leq g$ for all $n$, then
	\begin{equation}
		\limsup_{n \to \infty} \int_\Omega f_n \dd \mu \leq \int_\Omega f \dd \mu.
	\end{equation}
\end{lemma}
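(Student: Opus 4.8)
The plan is to deduce this ``reverse'' form of Fatou's lemma from the standard (non-negative) Fatou's lemma, which I take as known. The key observation is that although the $f_n$ need not be non-negative, the functions $g - f_n$ are: by hypothesis $f_n \leq g$ pointwise, and $g$ is measurable, so each $g - f_n$ is a non-negative measurable function (with values in $[0, \infty]$), to which the classical Fatou inequality applies directly.

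First I would apply standard Fatou to the sequence $(g - f_n)_n$, obtaining
\[
	\int_\Omega \liminf_{n \to \infty} (g - f_n) \dd \mu \leq \liminf_{n \to \infty} \int_\Omega (g - f_n) \dd \mu.
\]
Next I would rewrite both sides. On the left, since $g$ does not depend on $n$, $\liminf_n (g - f_n) = g - \limsup_n f_n = g - f$ pointwise, and because $g$ is integrable the integral splits: $\int_\Omega (g - f) \dd \mu = \int_\Omega g \dd \mu - \int_\Omega f \dd \mu$. On the right, $\int_\Omega (g - f_n) \dd \mu = \int_\Omega g \dd \mu - \int_\Omega f_n \dd \mu$ (again using integrability of $g$, so no $\infty - \infty$ arises), and $\liminf_n \bigl( \int_\Omega g \dd \mu - \int_\Omega f_n \dd \mu \bigr) = \int_\Omega g \dd \mu - \limsup_n \int_\Omega f_n \dd \mu$ since $\int_\Omega g \dd \mu$ is a finite constant. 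Substituting these into the inequality and cancelling the finite quantity $\int_\Omega g \dd \mu$ from both sides yields exactly $\limsup_n \int_\Omega f_n \dd \mu \leq \int_\Omega f \dd \mu$.

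The only real care needed --- and hence the main (and rather minor) obstacle --- is bookkeeping with extended-real values: one must note that $f = \limsup_n f_n$ is measurable (as a limsup of measurable functions), that the pointwise identity $\liminf_n(g - f_n) = g - f$ and the splitting of the integrals are legitimate given only that $g$ is integrable (so $g$ is finite $\mu$-a.e., which suffices to avoid indeterminate forms), and that cancelling $\int_\Omega g \dd \mu$ is valid precisely because it is finite. No further ideas are required; the statement is a direct corollary of the non-negative Fatou lemma, which is why the paper omits the proof.
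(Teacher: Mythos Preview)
Your argument is correct and is exactly the standard derivation of the reverse Fatou inequality from the classical (non-negative) Fatou lemma via the substitution $f_n \mapsto g - f_n$; the bookkeeping remarks about finiteness of $\int_\Omega g\,\dd\mu$ and measurability of $f$ are appropriate. The paper itself omits the proof entirely, so there is nothing further to compare.
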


We are now in a position to prove convergence of the Kullback--Leibler divergence, given pointwise convergence of the posterior. As uniform convergence of the posterior has been studied extensively in Section~\ref{subsec:convergence}, assuming pointwise convergence is not a restriction. However, we additionally assume positivity of the posterior, as the Kullback--Leibler becomes undefined otherwise.

\begin{theorem}
	\label{thm:klconvergence1}
	Suppose $\| p_N(\theb \mid \mathbf{z}) - p(\theb \mid \mathbf{z}) \|_\infty \to 0$ for $N \to \infty$, $p_N(\theb \mid \mathbf{z}) > \vareps \, p(\theb) > 0$, and $p(\theb \mid \mathbf{z}) > 0$ in $\Omega$. Then
	\begin{equation}
		\DKL{p(\theb \mid \mathbf{z})}{p_N(\theb \mid \mathbf{z})} \to 0.
	\end{equation}
\end{theorem}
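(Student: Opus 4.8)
The plan is to establish convergence of $\DKL{p(\theb \mid \mathbf{z})}{p_N(\theb \mid \mathbf{z})}$ by first obtaining pointwise convergence of the integrand $p(\theb \mid \mathbf{z}) \log \frac{p(\theb \mid \mathbf{z})}{p_N(\theb \mid \mathbf{z})}$, then dominating it by an integrable function so that Fatou's lemma (Lemma~\ref{lmm:fatou1}) can be applied to the sequence of its negatives. First I would note that uniform convergence $\| p_N(\theb \mid \mathbf{z}) - p(\theb \mid \mathbf{z}) \|_\infty \to 0$ implies pointwise convergence $p_N(\theb \mid \mathbf{z}) \to p(\theb \mid \mathbf{z})$ for every $\theb$. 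Since both densities are strictly positive on $\Omega$, Lemma~\ref{lmm:pointwise} gives $\log \frac{p_N(\theb \mid \mathbf{z})}{p(\theb \mid \mathbf{z})} \to 0$ pointwise, hence $p(\theb \mid \mathbf{z}) \log \frac{p(\theb \mid \mathbf{z})}{p_N(\theb \mid \mathbf{z})} \to 0$ pointwise as well.

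Next I would set up the domination. Write $f_N(\theb) = -\, p(\theb \mid \mathbf{z}) \log \frac{p(\theb \mid \mathbf{z})}{p_N(\theb \mid \mathbf{z})} = p(\theb \mid \mathbf{z}) \log \frac{p_N(\theb \mid \mathbf{z})}{p(\theb \mid \mathbf{z})}$, so that $\DKL{p(\theb \mid \mathbf{z})}{p_N(\theb \mid \mathbf{z})} = -\int_\Omega f_N \dd \theb$ and $f_N \to 0$ pointwise. To apply Fatou's lemma to $f_N$ I need a non-negative integrable $g$ with $f_N \le g$. Using the hypothesis $p_N(\theb \mid \mathbf{z}) > \vareps\, p(\theb)$ and writing $p(\theb \mid \mathbf{z}) = P(u(\theb)) p(\theb) / \gamma$ while $p_N(\theb \mid \mathbf{z}) = P(u_N(\theb)) p(\theb)/\gamma_N$, the logarithm $\log \frac{p_N(\theb \mid \mathbf{z})}{p(\theb \mid \mathbf{z})} = \log \frac{\gamma\, P(u_N(\theb))}{\gamma_N\, P(u(\theb))}$ is bounded above: the Lipschitz likelihood $P$ is bounded (it is a bounded PDF in $u$, or at worst bounded on the image of $u$), and $P(u(\theb))$ is bounded below by some positive constant on $\Omega$ wherever the posterior is positive — more directly, $p_N(\theb \mid \mathbf{z})/p(\theb \mid \mathbf{z}) \le \|P\|_\infty\, \gamma /(\vareps\, \gamma_N\, p(\theb \mid \mathbf z))\cdot p(\theb)$, which together with $\gamma_N \to \gamma > 0$ yields a uniform bound $\log \frac{p_N(\theb \mid \mathbf{z})}{p(\theb \mid \mathbf{z})} \le M$ for all large $N$, with $M$ independent of $\theb$ and $N$. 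Hence $f_N(\theb) \le M\, p(\theb \mid \mathbf{z}) =: g(\theb)$, and $g$ is integrable since $\int_\Omega p(\theb \mid \mathbf{z}) \dd \theb = 1$.

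Then Fatou's lemma gives $\limsup_{N\to\infty} \int_\Omega f_N \dd \theb \le \int_\Omega \limsup_{N\to\infty} f_N \dd \theb = 0$, so $\liminf_{N\to\infty} \DKL{p(\theb \mid \mathbf{z})}{p_N(\theb \mid \mathbf{z})} \ge 0$. Combined with the fact that the Kullback--Leibler divergence is always non-negative (so its $\limsup$ is also $\ge 0$, and we need the reverse), I would actually apply Fatou directly to the non-negative integrand $-f_N + g \ge 0$ wherever needed, or simply observe $\DKL{p(\theb \mid \mathbf{z})}{p_N(\theb \mid \mathbf{z})} = \int_\Omega (g - f_N)\dd\theb - \int_\Omega g\, \dd\theb$ and apply Fatou to $g - f_N \ge 0$ to get $\int_\Omega \liminf (g-f_N) \le \liminf \int_\Omega (g - f_N)$, i.e.\ $\int_\Omega g \le \liminf \int_\Omega(g-f_N)$, whence $\limsup \int_\Omega f_N \le 0$ and so $\limsup \DKL{p(\theb \mid \mathbf{z})}{p_N(\theb \mid \mathbf{z})} \le 0$; non-negativity of the divergence forces the limit to be exactly $0$.

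The main obstacle is the uniform upper bound on $\log \frac{p_N(\theb \mid \mathbf{z})}{p(\theb \mid \mathbf{z})}$ needed to produce the dominating function $g$: this is precisely where the hypotheses $p_N(\theb \mid \mathbf{z}) > \vareps\, p(\theb)$ and $\gamma_N \to \gamma > 0$ (the latter following from Theorem~\ref{thm:generalconvergence} and dominated convergence applied to \eqref{eq:gamma}) are essential. One must also be careful that the bound hold uniformly in $N$, which is why the argument is phrased for all sufficiently large $N$ — the finitely many remaining terms are each finite and do not affect the limit.
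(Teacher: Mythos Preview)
Your approach is essentially the paper's: combine Lemma~\ref{lmm:pointwise} for pointwise convergence with Fatou (Lemma~\ref{lmm:fatou1}) against a dominating function, then invoke non-negativity of the divergence. The paper applies Fatou to $f_N=\log\frac{p}{p_N}$ with respect to the measure $p(\theb\mid\mathbf z)\,\mathrm{d}\theb$ and takes $g=\sup_N\log\frac{p}{p_N}$; the hypothesis $p_N(\theb\mid\mathbf z)>\vareps\,p(\theb)$ then gives directly $\log\frac{p}{p_N}\le\log\frac{P(u(\theb))}{\vareps\gamma}\le\log\frac{\|P\|_\infty}{\vareps\gamma}$, so $g$ is bounded and hence $p$-integrable.

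There is one gap in your version. You work with the opposite sign, $f_N=p\log\frac{p_N}{p}$, and claim a uniform bound $\log\frac{p_N}{p}\le M$. Your displayed inequality (up to some stray factors) reduces to $p_N/p\le \|P\|_\infty\,p(\theb)/(\gamma_N\,p(\theb\mid\mathbf z))$, and turning this into a constant $M$ requires $p(\theb\mid\mathbf z)\ge c\,p(\theb)$ for some $c>0$. The hypothesis only asserts $p(\theb\mid\mathbf z)>0$ pointwise; you never justify the uniform lower bound. It \emph{does} follow---pass to the limit in $p_N(\theb\mid\mathbf z)>\vareps\,p(\theb)$ to get $p(\theb\mid\mathbf z)\ge\vareps\,p(\theb)$---but you need to say this. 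Alternatively, and more cleanly, flip the sign as the paper does: bounding $\log\frac{p}{p_N}$ from above uses the lower bound on $p_N$ that is already in the hypothesis, and avoids the detour through a lower bound on $p$.
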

\begin{proof}
	The proof consists of combining Lemma~\ref{lmm:pointwise}~and~\ref{lmm:fatou1}. The result follows from applying Lemma~\ref{lmm:fatou1} with $f_N = \log \frac{p}{p_N}$ and $f = 0$, in conjunction with $g = \sup_N \log \frac{p}{p_N}$. A direct application of this lemma yields $\DKL{p(\theb \mid \mathbf{z})}{p_N(\theb \mid \mathbf{z})} \to 0$. However, to apply Lemma~\ref{lmm:fatou1}, pointwise convergence of $\log \frac{p}{p_N}$ to 0 is necessary. This can easily be seen by applying Lemma~\ref{lmm:pointwise}, with $g_N = p_N$ and $g = p$.
\end{proof}

In a similar way, convergence of $\DKL{p_N(\theb \mid \mathbf{z})}{p(\theb \mid \mathbf{z})} \to 0$ can be proved.

\subsubsection{Convergence rate of the Kullback--Leibler divergence}
So far Theorem~\ref{thm:klconvergence1} only demonstrates convergence of the Kullback--Leibler divergence. The exact rate of convergence (or the constant involved in it) has not been deduced. In this section we will prove that the convergence rate doubles under general assumptions. These assumptions are:
\begingroup
\renewcommand\theenumi{(A.\arabic{enumi})}
\begin{enumerate}
	\item \label{ass:1} $\| p_N(\theb \mid \mathbf{z}) - p(\theb \mid \mathbf{z}) \|_\infty \leq C Q_N$, with $Q_N \to 0$ for $N \to \infty$,
	\item \label{ass:2} $p_N(\mathbf{z} \mid \theb) > 0$ and $p(\mathbf{z} \mid \theb) > 0$,
	\item \label{ass:3} $p(\theb)$ has compact support.
\end{enumerate}
\endgroup
The first assumption states that the estimated posterior converges, which can be shown using Theorem~\ref{thm:generalconvergence}. If $p_N$ converges algebraically to $p$ with rate $\alpha$, then $Q_N = N^{-\alpha}$. Many statistical models (such as the model from \eqref{eq:likelihood} with a uniform prior) fit in these assumptions. Note that the last assumption restricts the prior such that it has bounded support, so priors on an unbounded domain (e.g.\ the improper uniform prior or Jackson's prior) cannot be used in the setting of this section. The convergence result reads as follows.

\begin{theorem}
	\label{thm:klconvergence3}
	Suppose \ref{ass:1}, \ref{ass:2}, and \ref{ass:3} hold.
	
	Then
	\begin{equation}
		\DKL{p(\theb \mid \mathbf{z})}{p_N(\theb \mid \mathbf{z})} \leq K Q_N^2.
	\end{equation}
\end{theorem}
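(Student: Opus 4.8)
The plan is to reduce the Kullback--Leibler divergence to an $L^2$-type quantity and then bound that $L^2$ quantity by the square of the $\infty$-norm distance, exploiting the compact support of the prior to convert a sup-norm bound into an integral bound. The starting point is the elementary inequality
\[
	\log t \geq 1 - \frac{1}{t}, \quad \text{equivalently} \quad -\log t \leq \frac{1}{t} - 1,
\]
valid for $t > 0$. Writing $t = p_N(\theb \mid \mathbf{z}) / p(\theb \mid \mathbf{z})$ and integrating against $p(\theb \mid \mathbf{z})$, one obtains
\[
	\DKL{p(\theb \mid \mathbf{z})}{p_N(\theb \mid \mathbf{z})} = \int_\Omega p(\theb \mid \mathbf{z}) \log \frac{p(\theb \mid \mathbf{z})}{p_N(\theb \mid \mathbf{z})} \dd \theb \leq \int_\Omega \frac{\bigl(p(\theb \mid \mathbf{z}) - p_N(\theb \mid \mathbf{z})\bigr)^2}{p_N(\theb \mid \mathbf{z})} \dd \theb.
\]
This is the standard bound of KL by the $\chi^2$-divergence; the first step of the proof is simply to carry out this manipulation carefully, tracking the normalizing constants $\gamma$ and $\gamma_N$ from \eqref{eq:gamma} so that everything is stated for genuine probability densities.

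Next I would control the integrand. By assumption~\ref{ass:2} and the positivity already used in Theorem~\ref{thm:klconvergence1} (together with the fact that $P$ is continuous and $p(\theb)$ has compact support, so $p_N(\mathbf{z} \mid \theb)$ is bounded below by a positive constant for $N$ large — here one uses that $u_N \to u$ uniformly on the compact support and $P$ is continuous, hence $P(u_N(\theb))$ is eventually bounded away from $0$), there is a constant $m > 0$ with $p_N(\theb \mid \mathbf{z}) \geq m \, p(\theb)$ uniformly in $\theb$ and in $N$ for $N$ sufficiently large. Then
\[
	\int_\Omega \frac{\bigl(p(\theb \mid \mathbf{z}) - p_N(\theb \mid \mathbf{z})\bigr)^2}{p_N(\theb \mid \mathbf{z})} \dd \theb \leq \frac{1}{m} \int_\Omega \frac{\bigl(p(\theb \mid \mathbf{z}) - p_N(\theb \mid \mathbf{z})\bigr)^2}{p(\theb)} \dd \theb \leq \frac{1}{m} \|p_N(\theb \mid \mathbf{z}) - p(\theb \mid \mathbf{z})\|_\infty^2 \int_\Omega \frac{1}{p(\theb)} \dd \theb.
\]
The last factor is where assumption~\ref{ass:3} is essential: one needs $\int_\Omega p(\theb)^{-1} \dd \theb < \infty$, which holds when $\Omega$ is compact and $p$ is bounded below on it; alternatively, a cleaner route is to bound $(p - p_N)^2 / p_N \leq \|p - p_N\|_\infty \cdot |p - p_N| / p_N$ and integrate, using $\int_\Omega |p - p_N| / p_N \dd\theb \leq (1/m)\|p - p_N\|_\infty |\Omega| $ with $|\Omega|$ the (finite) Lebesgue measure of the compact support. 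Either way, combining with \ref{ass:1} gives $\DKL{p(\theb \mid \mathbf{z})}{p_N(\theb \mid \mathbf{z})} \leq K Q_N^2$ for a suitable constant $K$ and all large $N$, and absorbing the finitely many small-$N$ terms into $K$ completes the argument.

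The main obstacle is the uniform-in-$N$ lower bound $p_N(\theb \mid \mathbf{z}) \geq m\, p(\theb)$: without it the $\chi^2$ bound is useless, since the denominator could degenerate. Establishing it rigorously requires combining (i) uniform convergence $\|u_N - u\|_\infty \to 0$ on the compact support — which follows from \ref{ass:1} via the relation between posterior convergence and $\|u_N - u\|_{p(\theb)}$, or more directly from the Leja-node analysis of Section~\ref{subsec:accuracyofnodalsets} — with (ii) continuity of $P$ and the fact that $u(\Omega)$ is compact, so $P \circ u$ attains a positive minimum, and $P(u_N(\theb))$ is uniformly close to it. A subtle point is that the hypotheses as literally stated only give $\|p_N - p\|_\infty \to 0$ rather than $\|u_N - u\|_\infty \to 0$; I expect the proof to either invoke the stronger interpolant convergence implicitly assumed throughout Section~\ref{sec:convergence}, or to derive the needed lower bound on $p_N$ directly from $\|p_N - p\|_\infty \to 0$ together with the strict positivity and continuity of the true posterior $p(\theb \mid \mathbf{z})$ on the compact set $\Omega$ (which forces $p(\theb\mid\mathbf z) \geq c\,p(\theb)$ for some $c>0$, hence $p_N(\theb\mid\mathbf z) \geq (c - \|p_N-p\|_\infty/\inf p)\,\cdots$ — this last step needs care and is the place where the write-up must be precise).
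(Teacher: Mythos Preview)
Your approach is correct but takes a genuinely different route from the paper. You bound the KL divergence by the $\chi^2$-divergence via $-\log t \leq t^{-1}-1$, giving $\DKL{p}{p_N} \leq \int_\Omega (p-p_N)^2/p_N\,\dd\theb$, and then extract two factors of $Q_N$ from the squared numerator. The paper instead \emph{symmetrizes}: it bounds $\DKL{p}{p_N}$ by the sum $\DKL{p}{p_N}+\DKL{p_N}{p}$, which collapses algebraically to $\int_\Omega (p_N-p)\log(p_N/p)\,\dd\theb$, a product of two factors each controlled by $Q_N$. Your route is the textbook KL--$\chi^2$ inequality and is arguably more direct; the paper's symmetrization avoids dividing by $p_N$ but must instead bound $\|\log(p_N(\mathbf z\mid\theb)/p(\mathbf z\mid\theb))\|_\infty$, which it does via Lipschitz continuity of $\log$ on $[A,\infty)$ after extracting a uniform lower bound $p(\mathbf z\mid\theb)\geq A>0$ on the compact support. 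Both arguments rest on the same ingredients---compact support \ref{ass:3}, strict positivity \ref{ass:2}, and the sup-norm bound \ref{ass:1}---and both hinge on exactly the uniform lower bound you flag as the crux. One remark on your write-up: the step $\int_\Omega (p-p_N)^2/p(\theb)\,\dd\theb \leq \|p-p_N\|_\infty^2\int_\Omega p(\theb)^{-1}\,\dd\theb$ is wasteful and can fail if the prior vanishes on $\partial\Omega$; the cleaner fix is to note that $p-p_N$ already carries a factor $p(\theb)$, so $(p-p_N)^2/(m\,p(\theb))$ is $p(\theb)$ times a uniformly bounded squared likelihood difference, which integrates directly to something of order $Q_N^2$ without any assumption on $\int_\Omega p(\theb)^{-1}\,\dd\theb$.
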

\begin{proof}
	The Kullback--Leibler divergence is always positive, hence
	\begin{align*}
		\DKL{p(\theb \mid \mathbf{z})}{p_N(\theb \mid \mathbf{z})} &\leq \DKL{p_N(\theb \mid \mathbf{z})}{p(\theb \mid \mathbf{z})} + \DKL{p(\theb \mid \mathbf{z})}{p_N(\theb \mid \mathbf{z})} \\
		&= \int_\Omega p_N(\theb \mid \mathbf{z}) \log \frac{p_N(\theb \mid \mathbf{z})}{p(\theb \mid \mathbf{z})} \dd \theb + \int_\Omega p(\theb \mid \mathbf{z}) \log \frac{p(\theb \mid \mathbf{z})}{p_N(\theb \mid \mathbf{z})} \dd \theb \\
		&= \int_\Omega (p_N(\theb \mid \mathbf{z}) - p(\theb \mid \mathbf{z})) \log \left(\frac{\gamma}{\gamma_N} \frac{p_N(\mathbf{z} \mid \theb)}{p(\mathbf{z} \mid \theb)}\right) \dd \theb, \\
	\intertext{with $\gamma$ and $\gamma_N$ according to \eqref{eq:gamma}. By taking the absolute value of the integral, we can bound it using the $\infty$-norm as follows.}
		\DKL{p(\theb \mid \mathbf{z})}{p_N(\theb \mid \mathbf{z})} &\leq \int_\Omega |p_N(\theb \mid \mathbf{z}) - p(\theb \mid \mathbf{z})| \left| \log \left(\frac{\gamma}{\gamma_N} \frac{p_N(\mathbf{z} \mid \theb)}{p(\mathbf{z} \mid \theb)}\right) \right| \dd \theb \\
		&\leq \left\| \log \left(\frac{\gamma}{\gamma_N} \frac{p_N(\mathbf{z} \mid \theb)}{p(\mathbf{z} \mid \theb)}\right) \right\|_\infty \int_\Omega |p_N(\theb \mid \mathbf{z}) - p(\theb \mid \mathbf{z})| \dd \theb. \\
	\intertext{Then, by working out the first formula we obtain:}
		\DKL{p(\theb \mid \mathbf{z})}{p_N(\theb \mid \mathbf{z})} &\leq \| \log(\gamma) - \log(\gamma_N) + \log(p_N(\mathbf{z} \mid \theb)) - \log(p(\mathbf{z} \mid \theb)) \|_\infty \\
		&\phantom{{}\leq}\cdot \int_\Omega |p_N(\mathbf{z} \mid \theb) - p(\mathbf{z} \mid \theb)| p(\theb) \dd \theb. \\
	\intertext{By application of the triangle inequality the first term can be bounded. By the boundedness of $\Omega$, the latter term can be bounded. Both yield $\infty$-norms as follows:}
		\DKL{p(\theb \mid \mathbf{z})}{p_N(\theb \mid \mathbf{z})} &\leq \left(\| \log(\gamma) - \log(\gamma_N) \|_\infty + \| \log(p_N(\mathbf{z} \mid \theb)) - \log(p(\mathbf{z} \mid \theb)) \|_\infty\right) \\
		&\phantom{{}\leq}\cdot \| p_N(\mathbf{z} \mid \theb) - p(\mathbf{z} \mid \theb) \|_\infty. \\
	\intertext{As $\gamma > 0$, the first term can be bounded using the Lipschitz continuity of the logarithm. Moreover, $\Omega$ is compact, hence there exists an $A > 0$ such that $p(\mathbf{z} \mid \theb) > A$ with $\theb \in \Omega$. Therefore the second term can also be bounded using the Lipschitz continuity of the logarithm. The last term is already in the right format, and we obtain}
		\DKL{p(\theb \mid \mathbf{z})}{p_N(\theb \mid \mathbf{z})} &\leq \frac{1}{|\log A|}\left(\| \gamma - \gamma_N \|_\infty + \| p_N(\mathbf{z} \mid \theb) - p(\mathbf{z} \mid \theb) \|_\infty\right) \| p_N(\mathbf{z} \mid \theb) - p(\mathbf{z} \mid \theb) \|_\infty. \\
	\intertext{Finally, the result readily follows:}
		\DKL{p(\theb \mid \mathbf{z})}{p_N(\theb \mid \mathbf{z})} &\leq \left(C_1 Q_N + C_2 Q_N\right) C_2 Q_N = K Q_N^2. \qedhere
	\end{align*}
\end{proof}
Note that in a similar manner it can be proved that $\DKL{p_N(\theb \mid \mathbf{z})}{p(\theb \mid \mathbf{z})} \leq K Q_N^2$. If $Q_N = N^{-\alpha}$, then $Q_N^2 = N^{-2 \alpha}$, demonstrating that the rate of convergence indeed doubles.

\section{Numerical results}
\label{sec:numerics}
Three numerical test cases are employed to show the performance of our methodology. First, in Section~\ref{subsec:explicit} two explicit test cases are studied, which are cases where an expression for $u$ is known that can be evaluated accurately such that the exact posterior can be determined explicitly. We use these cases to verify the theoretical properties that have been derived in Section~\ref{sec:convergence}. For sake of comparison, these cases are studied using interpolation based on Clenshaw--Curtis nodes, Leja nodes, and the proposed adaptively weighted Leja nodes.

In Section~\ref{subsec:burgers}, we study calibration of the one-dimensional Burgers' equation. As an explicit solution is not used here, we can show the practical purposes of the interpolation procedure to problems that are defined implicitly. We determine the interpolant using Clenshaw--Curtis nodes and weighted Leja nodes. The last test case consists of the calibration of the closure parameters of the Spalart--Allmaras turbulence model. Here, a single evaluation of the likelihood is computationally expensive as it requires the numerical solution of the Navier--Stokes equations. In this case straightforward methods (such as MCMC) become intractable and therefore we will only study weighted Leja nodes.

Note that the quantity of interest in each case is the posterior and not the model. Therefore we mainly study the convergence in the posterior and to a lesser extent the convergence in the model.

\subsection{Explicit test cases}
\label{subsec:explicit}
We consider two analytic functions to demonstrate the applicability of the approach. Both functions are analytic in their domain of definition, but one of the functions cannot be represented globally by means of a power series expansion (which is often challenging in interpolation problems). The first function, a Gaussian function, has a large radius of convergence, such that a single power series expansion can be used to globally approximate the function accurately. The second function, a multi-variate extension of Runge's function, yields a power series expansion with a small radius of convergence such that a single power series expansion cannot be used to globally approximate the function. Both functions are defined for any dimension $d$.

\subsubsection{Gaussian function}
A well-known class of analytic functions is formed by Gaussian functions. We will use the following function to represent the model:
\begin{equation}
	\label{pt10-eq}
	u_d: {[0,1]}^d \rightarrow \mathbb{R}, \text{ with } u_d(\theb) \coloneqq \exp\left(- \sum_{k=1}^d \left(\vartheta_k-\frac{1}{2}\right)^2 \right).
\end{equation}
This function is a composition of the exponential function and a polynomial, which are both globally analytic. Hence also this function is globally analytic and can therefore be approximated well using polynomials. Consequently, any nodal set can be used to interpolate this function---even an equidistant set---so we use this test case merely for a sanity check of the procedure and the theory.

Two statistical models are employed to demonstrate the independence of our procedure from the likelihood. The first is the statistical model discussed before, i.e.\ $z_k = u_d(\theb) + \vareps_k$, with $\varepsb \sim \mathcal{N}(\mathbf{0}, \Sigma)$, $\Sigma = \sigma^2 I$, and $\sigma = 1/10$. As discussed before, the likelihood equals
\begin{equation}
	p_\mathcal{N}(\mathbf{z} \mid \theb) \propto \exp\left[-\frac{1}{2} \trans{(\mathbf{z} - u_d(\theb))} \Sigma^{-1} (\mathbf{z} - u_d(\theb))\right],
\end{equation}
where $\mathbf{z}$ is the vector containing the data. A data vector of $n = 20$ elements is generated by sampling from a Gaussian distribution with mean $u_d(1/4)$ and standard deviation $\sigma$. The subscript $\mathcal{N}$ refers to multivariate normal. For the second model, we do not write an explicit relation between the data and the model, but only impose the following likelihood:
\begin{equation}
	\label{eq:doubled}
	p_\beta(\mathbf{z} \mid \theb) \propto \begin{cases}
		(1 - e)^2 (1 + e)^2 & \text{if $|e| < 1$} \\
		0 & \text{otherwise}
	\end{cases}, \text{ with } e = \frac{\overline{z} - u_d(\theb)}{\overline{z}} \text{ and } \overline{z} = \frac{1}{n} \sum_{k=1}^n z_k.
\end{equation}
We call this likelihood the Beta likelihood (denoted with $\beta$), which we use because it has different characteristics than the Gaussian likelihood. As the standard deviation is significantly larger in the second case, the posteriors differ considerably. Both likelihoods are continuously differentiable, so we expect similar accuracy when applying the proposed algorithm. In both cases the prior is assumed to be uniform on the domain $[0,1]^d$.

\begin{figure}
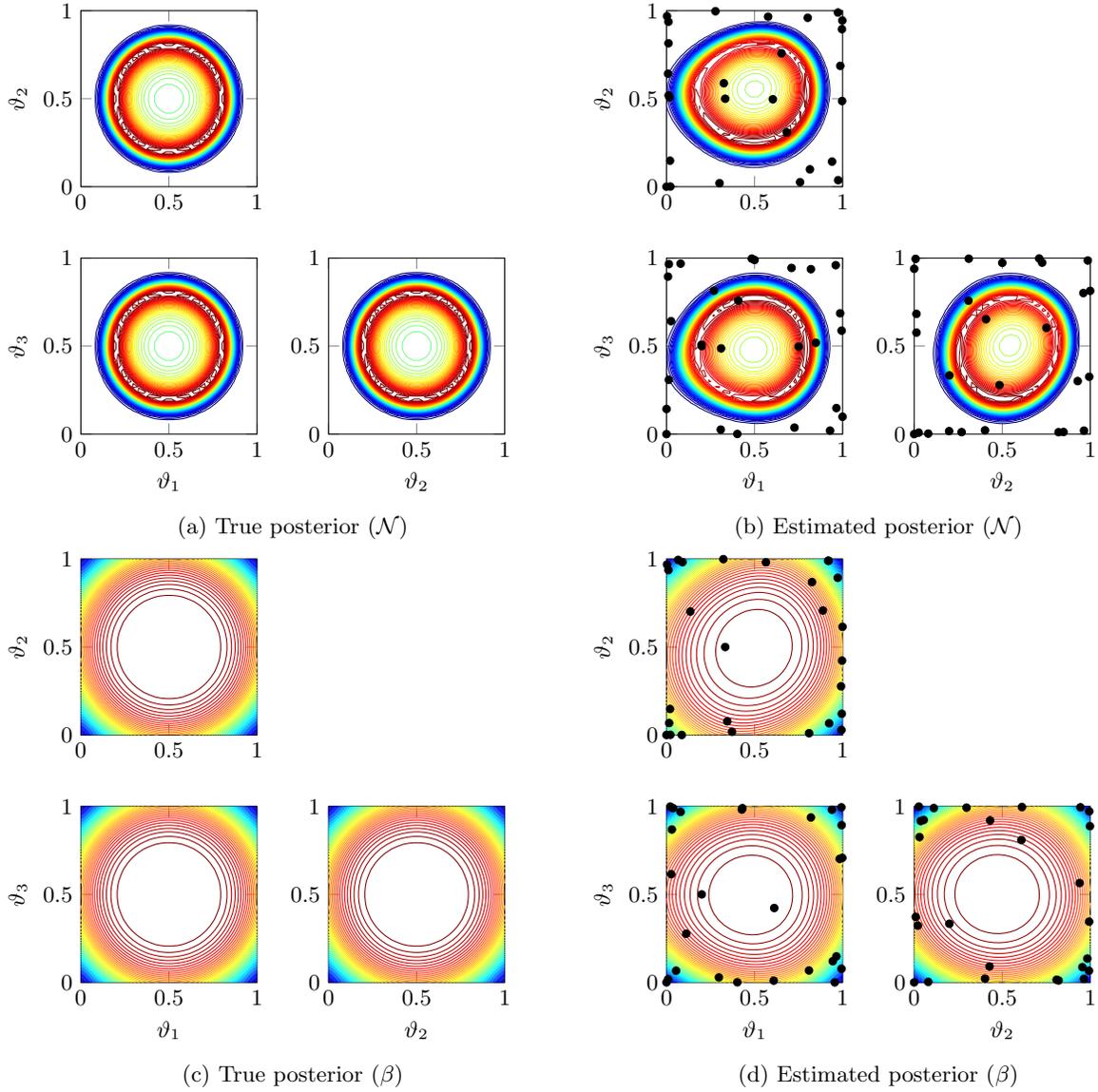

	\centering
	\footnotesize
	\begin{minipage}{.49\textwidth}
		\centering
		\includepgf{\textwidth}{\textwidth}{true_post.tikz}
		\subcaption{True posterior ($\mathcal{N}$)}
	\end{minipage}
	\begin{minipage}{.49\textwidth}
		\centering
		\includepgf{\textwidth}{\textwidth}{est_post_25.tikz}
		\subcaption{Estimated posterior ($\mathcal{N}$)}
	\end{minipage}

	\begin{minipage}{.49\textwidth}
		\centering
		\includepgf{\textwidth}{\textwidth}{true_post2.tikz}
		\subcaption{True posterior ($\beta$)}
	\end{minipage}
	\begin{minipage}{.49\textwidth}
		\centering
		\includepgf{\textwidth}{\textwidth}{est_post2_25.tikz}
		\subcaption{Estimated posterior ($\beta$)}
	\end{minipage}
	
	\caption{True and estimated posteriors with the analytic Gauss function as model using the multivariate normal ($\mathcal{N}$) likelihood and the Beta ($\beta$) likelihood with 25 Leja nodes. Red is high, blue is low.}
	\label{fig:analyticposteriors}
\end{figure}

Because the model under consideration is analytic, the value of $\zeta$ is not very important for the accuracy of the interpolation procedure (even $\zeta = 0$ works well in this case). We choose $\zeta=10^{-3}$, because then convergence of the posterior can be observed well, which is shown in Figure~\ref{fig:analyticposteriors} for $d = 3$. It is clearly visible that for the multivariate normal case, the nodes are placed more in the center of the domain (see for comparison Figure~\ref{fig:multileja}). This is also true for the second case, but less apparent due to the less intuitive structure of the posterior. The asymmetry between dimensions occurs due to the interpolation with Leja nodes, which are asymmetric by construction.

If we restrict ourselves to a one-dimensional case, the convergence of our algorithm can be assessed with high accuracy as it is possible to determine the Kullback--Leibler divergence with high accuracy using a quadrature rule. Moreover, a comparison with an interpolant based on Clenshaw--Curtis nodes can be performed. In higher dimensional cases such a comparison is not feasible, as determining the Kullback--Leibler divergence with high accuracy is intractable both with Monte Carlo (due to the relatively slow convergence) and with a quadrature rule (due to the deterioration of the high accuracy of the univariate quadrature rule). We want to emphasize here that assessing the convergence using the Kullback--Leibler divergence is essential, as the goal in this work is to construct an accurate \emph{posterior} (and not necessary an accurate \emph{interpolant}, for which various more efficient multivariate interpolation techniques exist).

The Kullback--Leibler divergence cannot be determined for the Beta likelihood. This is because two interpolants $u_{N_1}$ and $u_{N_2}$ in general do not have $|e| < 1$ exactly at the same $\theb$. Therefore the set where one model has $e = 0$ and the other $e > 0$ (and vice versa) is measurable, rendering the Kullback--Leibler divergence unbounded.

\begin{figure}
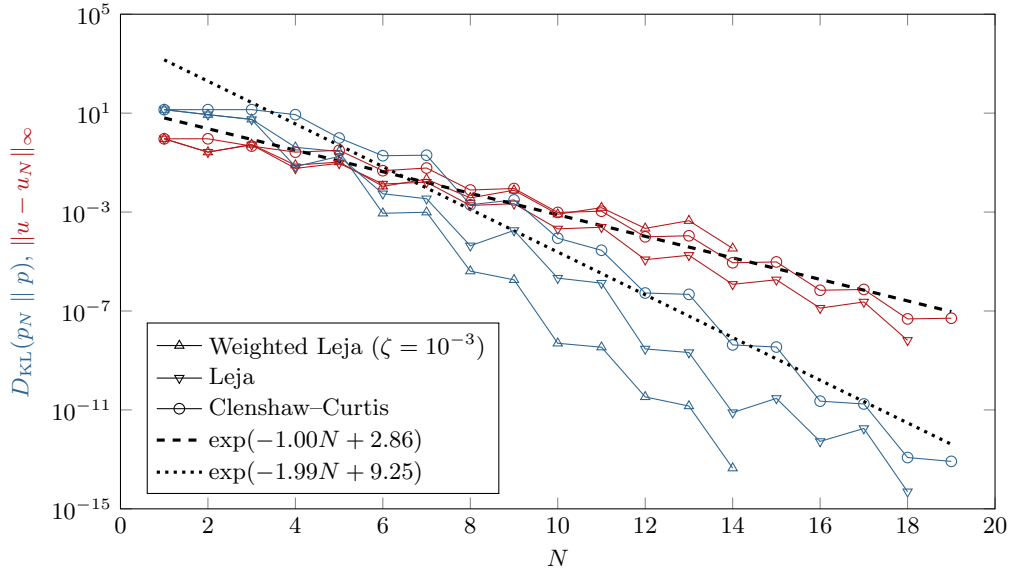

	\centering
	\small
	\includepgf{.8\textwidth}{.4944\textwidth}{analytic-convergence.tikz}
	\caption{Convergence for the Gaussian likelihood using the Gauss test function. The Kullback--Leibler divergence of the estimated posterior with respect to the true posterior is depicted in blue and the $\infty$-norm of the difference between the interpolant and the true model is depicted in red. The numerically estimated convergence rates of the Clenshaw--Curtis results are dashed and dotted.}
	\label{fig:analytic-convergence}
\end{figure}

The Kullback--Leibler divergence is determined using posteriors constructed with weighted Leja nodes, unweighted Leja nodes, and the Clenshaw--Curtis nodes (see Figure~\ref{fig:analytic-convergence}). All three nodal sets yield a model and a posterior that converge to respectively the true model and true posterior. The doubling of the convergence rate, according to Theorem~\ref{thm:klconvergence3}, already becomes apparent for the small number of nodes used and it is clearly visible that the weighted Leja nodes are mainly constructed for accuracy of the posterior instead of the model. The weighted Leja nodes show the fastest convergence, but the difference of model evaluations necessary to reach a certain accuracy level is small as the model under consideration is analytic.

\subsubsection{Runge's function}
A well-known test case for interpolation methods is the univariate Runge's function. We consider a multi-variate extension of it, defined (up to a constant) in the domain ${[0, 1]}^d$ as follows:
\begin{equation}
	u_d: [0,1]^d \rightarrow \mathbb{R}, \text{ with } u_d(\mathbf{x}) = \frac{5}{2+50 \sum_{k=1}^d {(x_k - \frac{1}{2})}^2}.
\end{equation}
This function is infinitely smooth, i.e.\ all derivatives exist and are continuous. However, its Taylor series has small radius of convergence. If a nodal set with exponentially growing Lebesgue constant is used to interpolate this function, the exponential convergence rate is significantly deteriorated (if it converges at all). This is well-known and typically called Runge's phenomenon. The Gaussian statistical model, the uniform prior, and the true value $\theb \equiv \frac{1}{4}$ are reconsidered for this test case. Results in terms of convergence using the three different nodal sets discussed previously are shown in Figure~\ref{fig:runge-convergence} (again $d=1$).

Differences between the nodal sets are more apparent than in the smooth test case. Again, the weighted Leja nodes are mainly constructed to obtain an accurate posterior and the accuracy of the interpolant is less important. It is clear that the weighted Leja nodes outperform the other nodal sets in the Kullback--Leibler norm. The unweighted Leja nodes and Clenshaw--Curtis nodes perform equally well, which is surprising as the Leja nodes do not have a logarithmically growing Lebesgue constant (contrary to the Clenshaw--Curtis nodes). Figure~\ref{fig:runge-convergence} also shows that upon increasing the value of $\zeta$ from $10^{-3}$ to $1$, the convergence rate of the weighted Leja sequence slightly decreases, as expected from theory. For small $N$, the posterior dominates the weighting function in Leja nodes, while for large $N$ the effect of $\zeta$ becomes important.

\begin{figure}
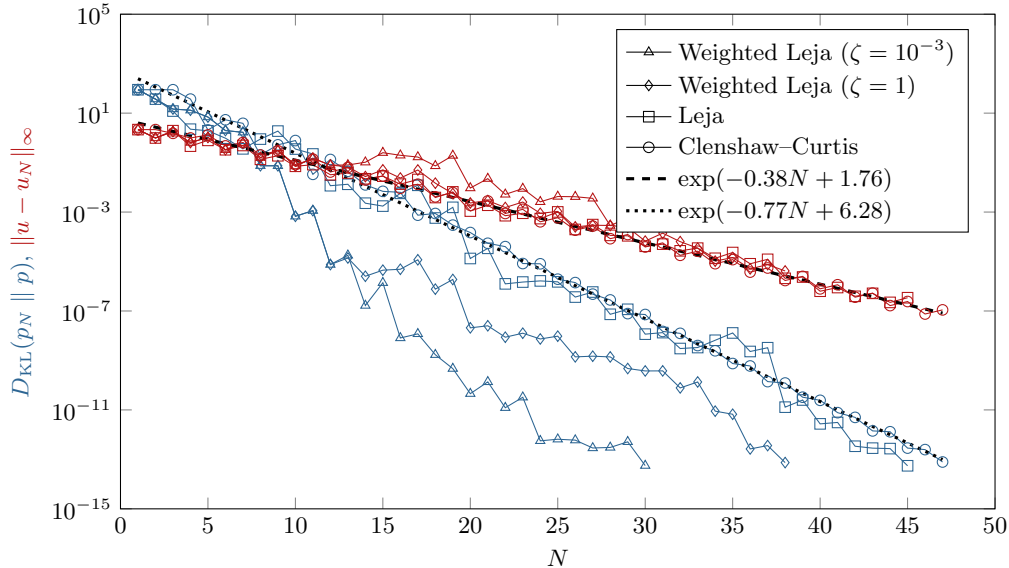

	\centering
	\small
	\includepgf{.8\textwidth}{.4944\textwidth}{runge-convergence.tikz}
	\caption{Convergence of the model (in red) and convergence of the Kullback--Leibler divergence (in blue) using four sampling algorithms for Runge's function. The numerically estimated convergence rates of the Clenshaw--Curtis results are dashed and dotted.}
	\label{fig:runge-convergence}
\end{figure}

\subsection{Burgers' equation}
\label{subsec:burgers}
In this section the Burgers' equation is considered where the boundary condition is unknown, extending the example from Marzouk~and~Xiu~\cite{Marzouk2009}. The one-dimensional steady state Burgers' equation for a solution field $y(x)$ and diffusion coefficient $\nu$ is stated as follows:
\begin{equation}
	0 = \nu \frac{\partial^2 y}{\partial x^2} - y \frac{\partial y}{\partial x},
\end{equation}
where boundary conditions complete the system. In this section, we will consider the equation on an interval $[-1, 1]$ with $\nu = 0.1$ and use the boundary conditions $y(-1) = 1 + \delta$ and $y(1) = -1$, where $\delta$ is unknown.

The inverse problem is as follows. Let $x_0$ be the zero-crossing of the solution $y$, i.e.\ $y(x_0) = 0$. Given a vector of noisy observations of $x_0$, we would like to obtain the value of $\delta$, i.e.\ reconstruct the boundary conditions from a specific value of the function. In the notation used so far, we would have a function $u: \mathbb{R} \rightarrow \mathbb{R}$ with $u(\delta) = x_0$. In the current setting, this function is only defined implicitly.

Let $\mathbf{z}$ be a vector with $n_d = 20$ noisy observations of $x_0$. We generate this vector by sampling $\delta$ from a normal distribution with mean $0.05$ and standard deviation $\sigma = 0.05$ and determining the corresponding values of $x_0$. We use a uniform prior such that $\delta \sim U[0, 0.1]$. The likelihood is Gaussian with zero mean and standard deviation $\sigma$, i.e.\ for each ``measurement'' $z_k$ (for $k = 1, \dots, n_d$) we have
\begin{equation}
	z_k - x_0(\delta) \sim \mathcal{N}(0, \sigma^2).
\end{equation}

A second-order finite volume scheme is employed to numerically solve the Burgers' equation equipped with a uniform mesh of $10^5$ cells. The zero-crossing is determined by piecewise interpolation of the solution.

\begin{figure}[t]
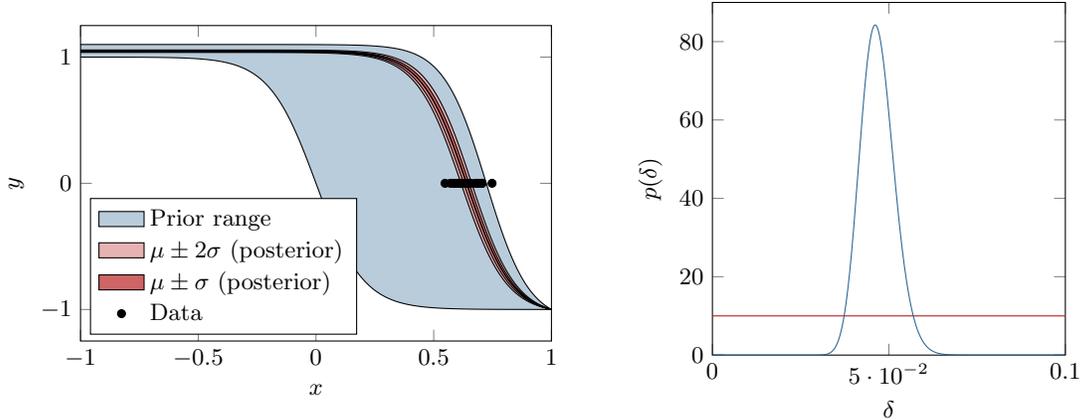

	\centering
	\begin{minipage}{.5\textwidth}
		\footnotesize
		\includepgf{.95\textwidth}{.7\textwidth}{burgers.tikz}
	\end{minipage}
	\begin{minipage}{.4\textwidth}
		\footnotesize
		\includepgf{.95\textwidth}{.95\textwidth}{burgers-posterior.tikz}
	\end{minipage}
	\caption{\emph{Left:} Propagation of the posterior through the Burgers' equations. \emph{Right:} The prior (in red) and the posterior (blue) of the Burgers' equation calibration test case.}
	\label{fig:burgers-mapost}
\end{figure}

We use Clenshaw--Curtis (scaled to ${[0,0.1]}$), Leja, and weighted Leja nodes to compute the posterior. In all three cases, the interpolant was refined until the Kullback--Leibler divergence of the estimated posterior with respect to the true posterior (computed using a fine quadrature rule) is smaller than $10^{-7}$. In Figure~\ref{fig:burgers-mapost} a sketch of the distribution of $u$ after propagation of this posterior and the exact posterior is depicted. The standard deviation of the output variable $u$ is small because we only take measurement errors into account (hence the standard deviation decreases for larger data sets). The posteriors determined after convergence of the three nodal sequences did not differ visually from the true posterior, so we do not present them separately.

There are large differences in the number of nodes that are necessary to obtain convergence, see Figure~\ref{fig:burgers-convergence}. For the posteriors determined using Clenshaw--Curtis nodes and Leja nodes, approximately 45 nodes are necessary. For the weighted Leja nodes, just 18 nodes are necessary for $\zeta=1$ and only 9 nodes are necessary for $\zeta = 10^{-3}$. This significant improvement is because the initial nodes already provide a good approximation of the posterior, which is leveraged when $\zeta$ is small. Again it is clearly visible that for small $N$, the posterior dominates the weighting function, while for large $N$ the value of $\zeta$ dominates. In both cases the weighted Leja nodes clearly outperform the other nodal sets.

\begin{figure}[t]
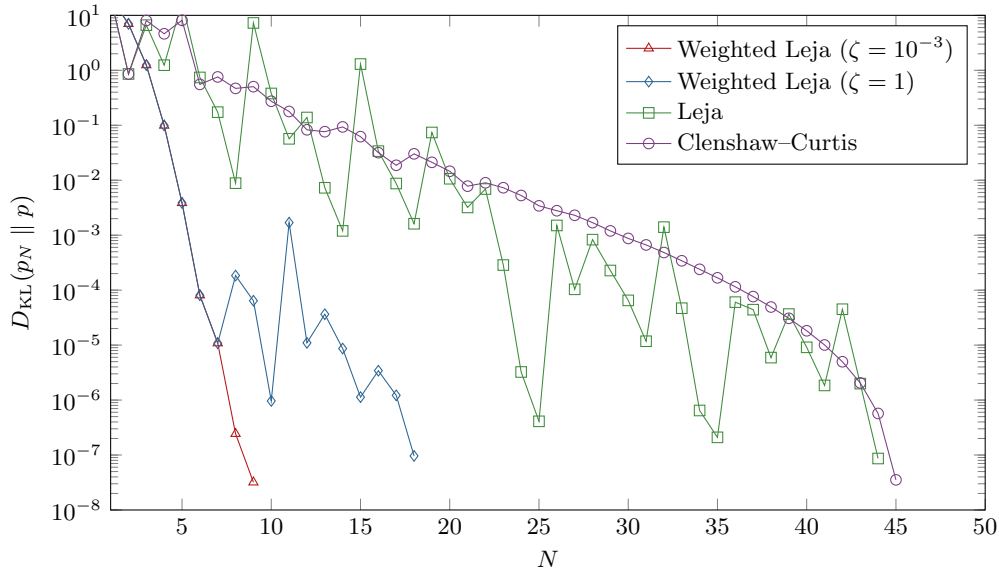

	\centering
	\includepgf{.8\textwidth}{.4944\textwidth}{burgers-convergence.tikz}
	\caption{Convergence of the posterior of Burgers' equation using three different sampling procedures.}
	\label{fig:burgers-convergence}
\end{figure}

\subsection{Turbulence model closure parameters}
\label{subsec:rae2822}
In this section we consider the flow around an airfoil (i.e.\ the two-dimensional cross section of a wing). The airfoil under consideration is the RAE2822, because extensive publicly available measurements have been performed for this particular airfoil. We use the wind tunnel measurements for the pressure coefficient from \citet{Cook1979} and study Case~6, i.e.\ the angle of attack equals $2.92^\circ$, the Mach number equals $0.725$, and the Reynolds number equals $6.5 \cdot 10^6$. These parameters are not corrected for wind tunnel effects, which is necessary for comparison with numerical simulations. See \citet{Slater2000} for more information about such a correction procedure and \citet{Cook1979} for more information about the measurement data under consideration, as these details are out of the scope of this article. The transonic flow around the airfoil in this case is depicted in Figure~\ref{fig:rae2822-cp}, determined numerically with the canonical turbulence coefficients of the Spalart--Allmaras turbulence model. It is clearly visible that there is shock formation on the upper side of the airfoil.

\begin{figure}
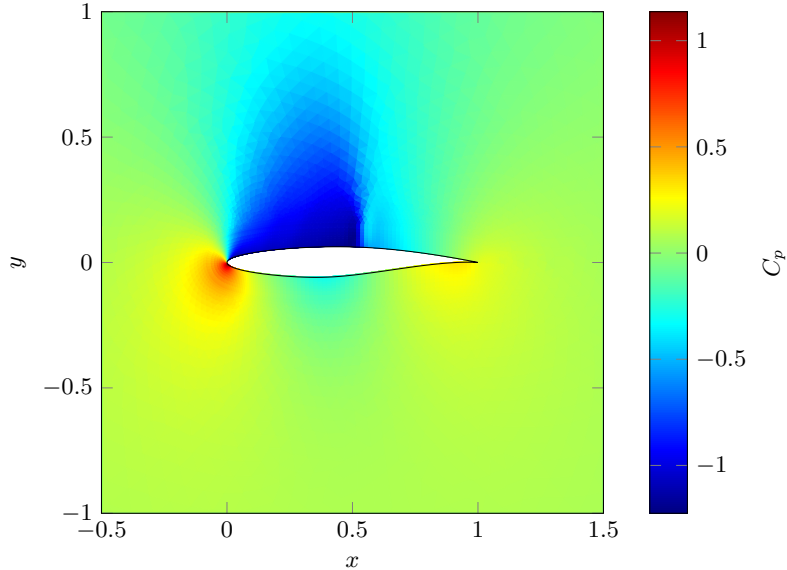

	\centering
	\includepgf{.5\textwidth}{.5\textwidth}{rae2822-cp.tikz}
	\caption{The pressure coefficient around the RAE2822 airfoil using the canonical turbulence coefficients.}
	\label{fig:rae2822-cp}
\end{figure}

\subsubsection{Problem description}
The flow around an airfoil is often modeled using the Navier--Stokes equations. Due to the large range of scales present in the solution of these equations, typically the Reynolds-averaged Navier--Stokes (RANS) equations are employed. The details are out of the scope of this article, we refer the interested reader to Wilcox~\cite{Wilcox1998}. The RANS equations do not form a closed set of equations and require a closure model. Typically, the Boussinesq hypothesis is used, which introduces an eddy viscosity. This viscosity models the effect of turbulence in the flow and requires an additional set of equations called a turbulence model. Throughout the years, many different turbulence models have been developed. These models have fitting parameters, chosen such that the model represents idealized test cases well. Often the coefficients are calibrated in a non-systematic way (e.g.\ by hand). A turbulence model commonly used for flow over airfoils is the Spalart--Allmaras (SA) turbulence model, which consists of a single equation modeling the transport, production, and dissipation of the eddy viscosity \cite{Spalart1992}.

Several forms of this model exist. The original SA turbulence model has 10 constants, typically defined as follows:
\begin{center}
	\begin{tabular}{r l | r l | r l}
		\hline
		$\sigma$ & $2/3$ & $C_{b1}$ & $0.1355$ & $C_{b2}$ & $0.622$ \\
		$\kappa$ & $0.41$ & $C_{w2}$ & $0.3$ & $C_{w3}$ & $2.0$ \\
		$C_{t3}$ & $1.2$ & $C_{t4}$ & $0.5$ & $C_{v1}$ & $7.1$  \\
		\hline
	\end{tabular}
\end{center}
Here we omit $C_{w1}$, which is commonly defined as $C_{w1} = C_{b1} / \kappa^2 + (1+C_{b2}) / \sigma$.

Although tested extensively, straightforward physical derivations do not exist for many of the parameter values above. In this section, we will calibrate these parameters systematically using Bayesian model calibration and apply the weighted Leja nodes proposed in this article. We will employ $\zeta = 1$, to make sure that we have convergence without spurious oscillations. This value is probably slightly too large to obtain a converging approximate posterior, but due to the large computational expense that is necessary for these simulations and the inability to run these simulations a large number of times we choose $\zeta$ rather too large than too small. Results on the calibration of turbulence parameters exist in literature (e.g.\ \cite{Edeling2014,Cheung2011}), where it is customary to calibrate the parameters using MCMC methods.

We will employ SU2 to numerically solve the RANS equations. SU2 is a second-order finite-volume computational fluid dynamics solver with support for the Spalart-Allmaras turbulence model. We have made a few minor modifications to allow the turbulence parameters to be changeable using configuration files. SU2 has been tested extensively to the airfoil test case (for the canonical turbulence parameters) with these options, see \cite{Palacios2014}.

\subsubsection{Statistical model}
We consider the same parameters for calibration as Edeling~et~al.~\cite{Edeling2014} and Cheung~et~al.~\cite{Cheung2011}, namely $\theb = \trans{(\kappa, \sigma, C_{b1}, C_{b2}, C_{v1}, C_{w2}, C_{w3} )}$. The remaining parameters are fixed at their canonical values.

We are now in the position to state the statistical model. Let $x$ be the spatial parameter along the contour of the airfoil, i.e.\ $x$ parametrizes the airfoil such that each $x$ has a unique value for the pressure coefficient. Suppose the data (i.e.\ a vector of pressure coefficients) $\mathbf{z} = \trans{(z_1, \dots, z_n)}$ is provided at locations $\mathbf{x} = \trans{(x_1, \dots, x_n)}$ on the airfoil. Then the statistical model under consideration is as follows:
\begin{equation}
\begin{aligned}
	v(\theb; x_k) &= u(\theb; x_k) + \delta(x_k), \\
	z(x_k) &= v(\theb; x_k) + \vareps_k,
\end{aligned}
\end{equation}
where $\delta(x) \sim \mathcal{N}(0, f(x, x'))$ is a Gaussian process with mean $0$ and squared exponential covariance function
\begin{equation}
	f(x, x') = A \exp\left[ -\frac{|x - x'|^2}{2 l^2} \right].
\end{equation}
Moreover, we have all $\vareps_k$ independently and identically normally distributed with mean zero and standard deviation $\tilde{\sigma}$, which is defined explicitly later (we use $\tilde{\sigma}$ to overcome confusion with one of the turbulence coefficients). The measurement data from \citet{Cook1979} has $n = 103$ measurement locations along the surface of the airfoil.

We choose the model such that $v$ is the ``true'' process that is modeled by $u$. $\delta$ and $\vareps_k$ are the model and measurement error respectively. Both require an estimation based on knowledge of the model and the data, as our calibration framework currently does not include hyperparameters. Bounds for the measurement error are provided in \citet{Cook1979}, resulting in $\tilde{\sigma} \approx 0.01$. This value is larger than the error of the measurement data, thus incorporating potential additional errors. The parameters of the Gaussian process are chosen to be $A = 0.6$ and $l = 0.2$, based on the largest error that we expect (which is approximately 0.5, based on the strength of the shock on top of the airfoil) and the distance between two measurement locations.

\subsubsection{Prior and likelihood}
The prior is assumed to be multivariate uniformly distributed with mean equal to the canonical values of the parameters and a variation of 50\%. The support of the prior is therefore as follows:
\begin{center}
	\begin{tabular}{r l}
		\hline
		$\kappa$ & $[0.205, 0.615]$ \\
		$\sigma$ & $[1/3, 1]$ \\
		$C_{b1}$ & $[0.0678, 0.2033]$ \\
		$C_{b2}$ & $[0.311, 0.933]$ \\
		$C_{v1}$ & $[3.55, 10.65]$ \\
		$C_{w2}$ & $[0.2, 0.4]$ \\
		$C_{w3}$ & $[1, 3]$ \\
		\hline
	\end{tabular}
\end{center}

The likelihood readily follows from the statistical model:
\begin{align}
	p(\mathbf{z} \mid \theb) &\propto \exp \left[ -\frac{1}{2} \trans{\mathbf{d}} C^{-1} \mathbf{d} \right], \\
	\intertext{with $\mathbf{d}$ the misfit and $C$ the covariance, i.e.}
	d_k &= z(x_k) - u(\theb; x_k), \\
	C &= \Sigma + C_f, \text{ with } C_f^{[i, j]} = f(x_i, x_j) \text{ and } \Sigma = \tilde{\sigma}I.
\end{align}
Finally, the posterior is formed by a multiplication of the likelihood with the prior in the usual way.

\subsubsection{Results}
\begin{figure}
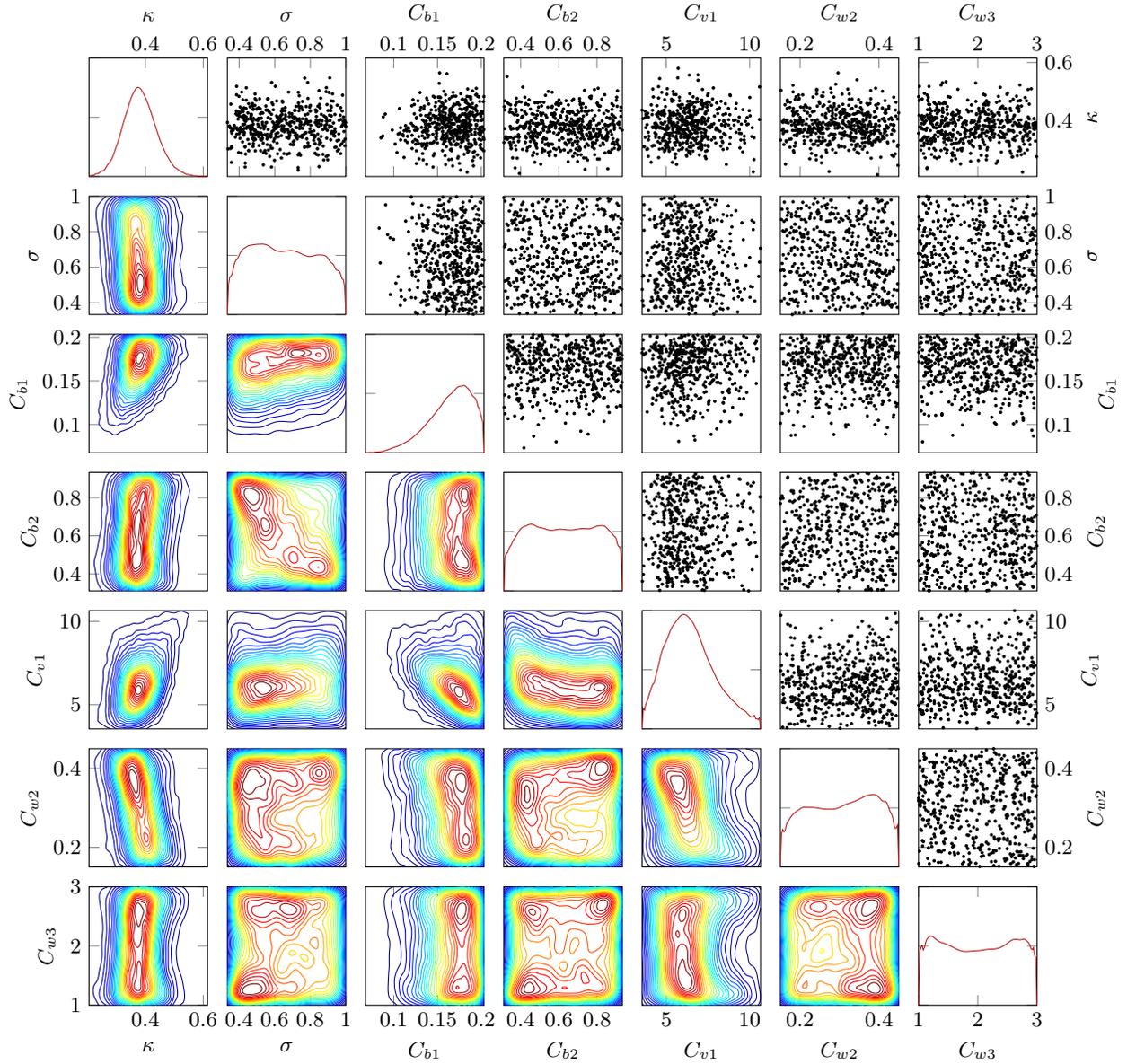

	\centering
	\footnotesize
	\includepgf{.85\textwidth}{.85\textwidth}{rae2822-posterior.tikz}
	\caption{The marginals of the posterior under consideration of the RAE2822 airfoil case. Red is high, blue is low.}
	\label{fig:megaposterior}
\end{figure}
We applied calibration with weighted Leja nodes to the current problem and used 100 nodes. The convergence was assessed using cross-validation between two consecutive interpolants, up to the point that visually no difference can be observed in the posterior. The one- and two-dimensional marginals from the obtained posterior are depicted in Figure~\ref{fig:megaposterior}. We cannot compare the result with the true posterior, as we did for the previous two test cases, so instead we compare the posterior with results obtained in literature.

The results are similar to \citet{Cheung2011}, who calibrated the same set of parameters using MCMC using more than 30\,000 samples. In their study, the parameters $\kappa$ and $C_{v1}$ are informed best, which is also clearly visible in Figure~\ref{fig:megaposterior}. In our results, the parameter $C_{b1}$ is also informed in comparison with the other parameters, but this is not the case in the study from \citeauthor{Cheung2011} There can be various reasons for this, among others the fact that \citeauthor{Cheung2011} used a flat plate test case for the calibration and used the skin friction coefficient as data.

In the study from \citet{Edeling2014} a similar test case is performed. The parameters $\kappa$ and $C_{v1}$ have the highest Sobol' indices and are therefore most influential on the model output (which is the pressure coefficient in this case). This is consistent with our study and the study from \citeauthor{Cheung2011} The results for $C_{b1}$ again differ, which we attribute to the fact that \citeauthor{Edeling2014} also used the skin friction coefficient and the flat plate test case.

To conclude, the proposed method is capable of constructing a good approximate posterior using a fraction of the 30\,000 model evaluations that were necessary in the MCMC case. The posterior compares very well with both studies. By applying MCMC to the constructed surrogate, we can propagate the posterior and make a prediction under uncertainty. To illustrate this, the prior and the posterior are propagated through the surrogate to obtain uncertainty bounds on the pressure coefficient. The results are depicted in Figure~\ref{fig:rae2822-cp-enveloppe}. It is clearly visible that the largest uncertainty originating from the posterior is near the shock on top of the airfoil. The uncertainty bounds obtained by propagation of the prior are relatively large, demonstrating that the calibration indeed did improve the accuracy of the prediction. Notice that there are oscillations visible in the range obtained by propagating the prior, but not in the result obtained by propagating the posterior. This is an indication that the surrogate indeed is refined with respect to the posterior.

\begin{figure}
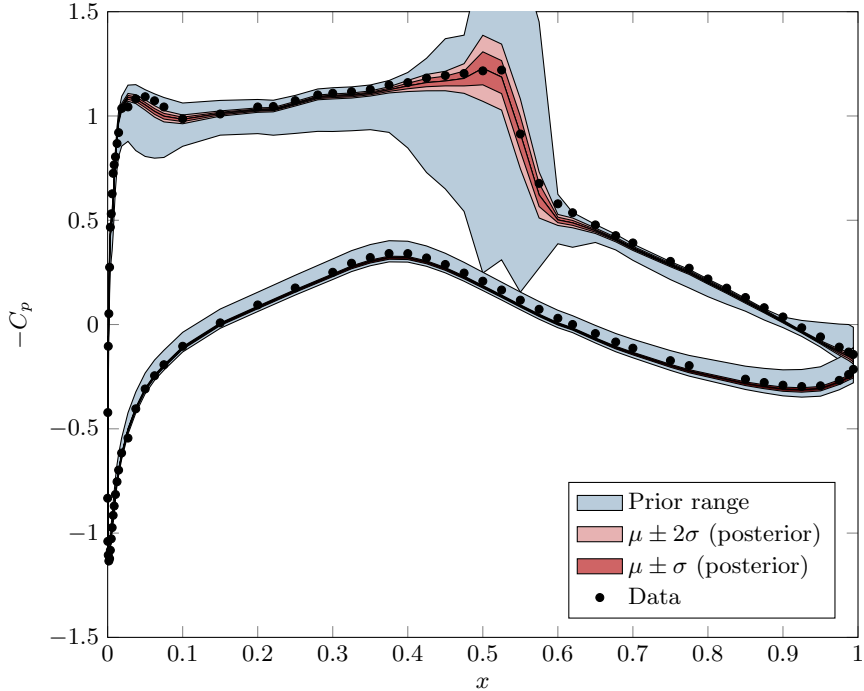

	\centering
	\includepgf{.7\textwidth}{.6\textwidth}{rae2822-meanstd.tikz}
	\caption{Pressure coefficient along the airfoil determined by propagation of the prior and the posterior through the interpolating surrogate. The standard deviation is shaded in red.}
	\label{fig:rae2822-cp-enveloppe}
\end{figure}

\section{Conclusion}
\label{sec:conclusion}
Bayesian model calibration is an attractive approach for calibrating model parameters of complex physical models. It is customary to sample the posterior using MCMC methods, but these are only tractable if the model under consideration can be evaluated rapidly. We consider problems where this is not the case, such as the calibration of turbulence closure coefficients.

Our proposed method consists of replacing the computationally expensive model with an interpolating surrogate model. The interpolant is determined adaptively using weighted Leja nodes, where the weighting function directly uses the posterior and is changed with each iteration. We have proposed a formulation in which the weighting function consists of two parts: a part incorporating the currently available posterior and a part incorporating the accuracy of the interpolant. The balance between these two can be changed adaptively by the parameter $\zeta$ and convergence is guaranteed for any $\zeta > 0$. The ``best'' value of $\zeta$ depends on the specifics of the model, the likelihood, and the posterior, and can be changed adaptively during the calibration procedure. Compared to conventional nodal sets (such as Clenshaw--Curtis nodes), we obtain more accurate results with less nodes.

Theoretically we have proved that if the interpolant converges to the true model in the $\infty$-norm, then the estimated posterior converges to the true posterior in the $\infty$-norm with a similar rate. Under mild conditions, the Kullback--Leibler divergence between the estimated and exact posterior converges with a doubled rate.

The three conducted numerical experiments confirm these theoretical findings: if the interpolant converges to the model, the Kullback--Leibler divergence converges to zero with doubled rate. For the explicit numerical test cases, this doubled convergence in the Kullback--Leibler divergence was clearly visible. The calibration of the Burgers' equation shows that the approach also works well for models that are defined implicitly.

Finally, calibration of turbulence closure parameters has been discussed, showing that the approach is truly applicable to computationally expensive models. We have compared the results from our calibration procedure with results conducted using Monte Carlo methods and the posteriors shows good resemblance, at a highly reduced computational cost.

\subsection{Future work}
Firstly, there are some limitations about the statistical model that can be used in our approach. We require that it can be written as a function of the model and no hyperparameters are used in the statistical model. Extending the current approach to such a setting is an important step, as hyperparameters are often employed in Bayesian model calibration.

Secondly, we believe the value of $\zeta$ should depend on the specifics of the model and further research is required to find the optimal value a priori or either dynamically during the procedure.

\section{Acknowledgments}
We thank Richard~Dwight for his valuable remarks on an early draft of this article. This research is part of the Dutch EUROS programme, which is supported by NWO domain Applied and Engineering Sciences and partly funded by the Dutch Ministry of Economic Affairs.

\bibliographystyle{plainnatnourl}
\bibliography{article-cited}

\end{document}